\newcommand{\junk}[1]{}
\newenvironment{proof}{{\bf Proof:  }}{\hfill\rule{2mm}{2mm}}
\newenvironment{proofof}[1]{{\bf Proof of #1:  }}{\hfill\rule{2mm}{2mm}}
\newtheorem{theorem}{Theorem}
\newtheorem{lemma}[theorem]{Lemma}
\newtheorem{proposition}[theorem]{Proposition}
\newtheorem{corollary}[theorem]{Corollary}
\newtheorem{conjecture}[theorem]{Conjecture}
\newtheorem{question}[theorem]{Question}
\newcommand{\Z}{\ensuremath{\mathbb Z}}
\newcommand{\N}{\ensuremath{\mathbb N}}
\newcommand{\vect}[1]{\bm{#1}}
\title{Polychromatic Colorings on the Integers}
 \author{
 Maria Axenovich\thanks{Karlsruhe Institute of Technology, Karlsruhe, Germany, \texttt{maria.aksenovich@kit.edu}.} 
 \and John Goldwasser\thanks{ West Virginia University, Morgantown, WV, USA, \texttt{jgoldwas@math.wvu.edu}.}
 \and Bernard Lidick\'y\thanks{Iowa State University, Ames, IA, USA, \texttt{lidicky@iastate,edu}. Supported by NSF grant DMS-1600390.}
 \and Ryan R. Martin \thanks{Iowa State University, Ames, IA, USA, \texttt{rymartin@iastate.edu}. Research supported in part by Simons Foundation Collaboration Grant (\#353292, to R.R. Martin).}
 \and David Offner\thanks{Westminster College, New Wilmington, PA, USA, \texttt{offnerde@westminster.edu}. Supported by Westminster College McCandless Research Award.}
 \and John Talbot\thanks{University College London, London, UK, \texttt{j.talbot@ucl.ac.uk}.}
 \and Michael Young\thanks{Iowa State University, Ames, IA, USA, \texttt{myoung@iastate.edu}.}
 }
\begin{document}

\maketitle

\begin{abstract}
We show that for any set $S\subseteq \mathbb{Z}$, $|S|=4$ there exists a 3-coloring of $\mathbb{Z}$ in which every translate of $S$ receives all three colors. This implies that $S$ has a codensity of at most $1/3$, proving a conjecture of Newman [D. J. Newman,  Complements of finite sets of integers, \textit{Michigan Math. J.} 14 (1967) 481--486]. %\cite{New67}.
We also consider related questions in $\mathbb{Z}^d$, $d\geq 2$.
\end{abstract}

\section{Introduction}
Throughout the paper, let $G$ denote an arbitrary abelian group.  Given $S,T \subseteq G$, $n \in G$, define $S+T = \{s+t: s\in S, t\in T\}$ and $n+S  = \{n\} + S$. Any set of the form $n+S$ is called a \textit{translate} of $S$. Given a subset $S$ of $G$, a coloring of the elements of $G$ is \textit{$S$-polychromatic} if every translate of $S$ contains an element of each color.  Define the \textit{polychromatic number} of $S$, denoted $p_G(S)$, to be the largest number of colors allowing an $S$-polychromatic coloring of the elements of $G$.  We just write $p(S)$ when the choice of $G$ is clear from context. 

We begin with some elementary observations that will be used repeatedly.  First, if $S'$ is a subset of $S$, then any $S'$-polychromatic coloring is also an $S$-polychromatic coloring, and thus $p(S') \le p(S)$. Also, if $S'=n+S$ is a translate of $S$, then $S'$ and $S$ have the same set of translates, so $p(S') = p(S)$.

We are primarily concerned with the setting where $G= \Z$ and $S$ is finite. If $S \subseteq \Z$ has cardinality 1 or 2, $p(S)=|S|$. For $|S|=3$, $p(S)$ can be 2 or 3.  For example, if $S=\{0,1,5\}$ then every translate of $S$ contains three elements which are each in different congruence classes $\pmod{3}$.  Thus a 3-coloring of the integers where each congruence class (mod 3) is colored with a different color is $S$-polychromatic, and $p(\{0,1,5\})=3$. However $p(\{0,1,3\})=2$.  To see that $p(\{0,1,3\}) \neq 3$, let $\chi$ be a 3-coloring of $\Z$ with $\chi(0)$, $\chi(1)$, and $\chi(3)$ all different. Some element $s \in \{0,1,3\}$ has $\chi(s) = \chi(2)$, and there is a translate of $\{0,1,3\}$ that contains both $s$ and 2, so the coloring is not polychromatic.  Our main result concerns the polychromatic numbers of sets with cardinality 4.

\begin{theorem}\label{ps34}
If $S \subseteq \Z$ and $|S|=4$, then $p(S) \ge 3$.
\end{theorem}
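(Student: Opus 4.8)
The plan is to construct, for each $4$-element $S$, an explicit $3$-coloring of $\mathbb{Z}$ witnessing $p(S)\ge 3$. First I would normalize: since $p$ is translation-invariant, translate so that $S=\{0,a,b,c\}$ with $0<a<b<c$. I would then reduce to the case $\gcd(a,b,c)=1$: if $g=\gcd(a,b,c)>1$ and $\chi'$ is an $S'$-polychromatic coloring for $S'=\frac{1}{g}S$, then $\chi(x)=\chi'(\lfloor x/g\rfloor)$ is $S$-polychromatic, because every translate of $S$ maps under $x\mapsto\lfloor x/g\rfloor$ onto a translate of $S'$. A useful consequence of $\gcd(a,b,c)=1$ is that $S$ is never contained in a single residue class mod $2$, so both residue classes mod $2$ meet every translate of $S$.

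It is convenient to reformulate the goal. A $3$-coloring is $S$-polychromatic exactly when each color class is a \emph{blocking set}, i.e. meets every translate of $S$; equivalently, no translate of $S$ is contained in the union of two color classes. So I want to partition $\mathbb{Z}$ into three sets, each meeting $n+S$ for all $n$. Two families of colorings supply most of what is needed. The first are \emph{modular} colorings: if $S$ meets all residues mod $3$ then coloring $x$ by $x\bmod 3$ works, since every translate then meets all residues and hence all colors; and if $S$ is a complete residue system mod $4$ then coloring the four residue classes with three colors (one color used twice) works, since every translate hits each residue class mod $4$ exactly once. The second, more flexible, family are the \emph{block} colorings $\chi_k(x)=\lfloor x/k\rfloor \bmod 3$, of period $3k$, where consecutive length-$k$ intervals receive colors $0,1,2,0,1,2,\dots$. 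For a given $S$ one checks $\chi_k$ on a single period; for example $\chi_3$ is $\{0,1,4,7\}$-polychromatic. The task becomes: for each $S$ not covered by the modular cases, select $k$ (and possibly a reflection or a shift of the pattern) from the gap data $g_1=a,\ g_2=b-a,\ g_3=c-b$ so that every translate spans blocks realizing all three residues mod $3$.

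I would organize the remaining work as a case analysis on the residue and gap structure of $S$. By the reduction, the hard sets are those that are \emph{not} complete mod $3$ and \emph{not} complete mod $4$; since $0\in S$ and $S$ cannot be entirely $\equiv 0$, such an $S$ uses exactly two residues mod $3$ and omits a residue mod $4$, which forces the gaps to be quite restricted. For each resulting family I would exhibit a block coloring $\chi_k$ with $k$ tied to the gaps, or a small modification of it, and verify polychromaticity by checking how $\{0,a,b,c\}$ distributes into the length-$k$ blocks over one period.

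The main obstacle is twofold. First, no single period works for all $S$: small periods genuinely fail---e.g. for $\{0,1,4,7\}$ one can check that there is no $S$-polychromatic $3$-coloring of period $5$, $6$, or $8$, since each would require three pairwise disjoint blocking sets in a cyclic group too small to contain them---so the period must be chosen as a function of $S$, and producing a uniform, provably correct recipe is the crux. Second, one must prove that the case analysis is exhaustive, i.e. that every $4$-set, after normalization, falls into one of the finitely many structural families for which a coloring has been produced. The genuinely delicate cases are the ``arithmetic-progression-like'' sets, which evade every small modulus and for which the block coloring must be tuned carefully and its rainbow property verified directly.
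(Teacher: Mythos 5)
Your proposal is a plan rather than a proof: each step that is genuinely hard in this theorem is deferred, and the one structural claim meant to make the case analysis finite is unsupported. The parts you nail down---normalization to $S=\{0,a,b,c\}$ with $\gcd(a,b,c)=1$, the mod-$3$ coloring when $S$ is complete mod $3$, the mod-$4$ coloring when $S$ is a complete residue system mod $4$, and the observation that the leftover sets use exactly two residues mod $3$---are the easy parts. What remains after these reductions is not a family with ``quite restricted'' gaps: it is infinite and contains exactly the sets where the difficulty lives. For instance, every set $\{0,1,12k,12k+1\}$ avoids both modular colorings (residues $0,1,0,1$ both mod $3$ and mod $4$), and one can check, as you do for $\{0,1,4,7\}$, that no single small modulus handles all of them; indeed for $\{0,1,12,13\}$ no coloring of period $5$ works at all, since each translate occupies four of the five residue classes mod $5$ and three color classes cannot all appear twice among five residues. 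So the burden of the proof is precisely to choose, for each such $S$, a workable period and coloring and to prove the recipe always succeeds---which you explicitly leave open (``producing a uniform, provably correct recipe is the crux''). That is the theorem, not a detail.

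The missing idea, which the paper supplies, is a specific and non-obvious choice of period: $m=c-a+b$. Modulo this $m$, the set $\{0,a,b,c\}$ becomes a translate of $\{0,\,b-a,\,b,\,2b-a\}$, i.e., of the sumset $\{0,b-a\}+\{0,b\}$, so every translate has a ``rectangle'' structure with the two repeated differences $b-a$ and $b$. The paper then sets $d_1=\gcd(b,m)$ and $d_2=\gcd(b-a,m)$ (which are coprime) and splits into two cases: if $\min\{d_1,d_2\}=1$, a rescaling inside $\Z_m$ turns $S$ into $\{0,1,g,g+1\}$ and interval-type colorings (close in spirit to your block colorings, plus a short table of sporadic periodic patterns and a computer check for diameter at most $288$) finish the job; if $\min\{d_1,d_2\}>1$, the coloring is built on the $d_1$ cycles $\{i(b-a)+jb \bmod m\}$, each cycle colored with an alternating or rotated pattern chosen so that consecutive cycles are compatible. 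These multi-cycle colorings are not block colorings $\lfloor x/k\rfloor \bmod 3$ of $\Z$ in your sense, so there is also no evidence that your proposed toolkit of colorings suffices even in principle. Without the modulus $m=c-a+b$ (or an equally effective device) to impose structure on every remaining $S$, your case analysis has no mechanism to terminate.
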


The proof of Theorem~\ref{ps34} is given in Section~\ref{ss4}.  For larger sets $S$, Alon, 
K{\v{r}}{\'\i}{\v{z}}, and Ne\v{s}et\v{r}il~\cite{AKN95} proved that $p(S) \ge \frac{(1+o(1))|S|}{3\ln|S|}$, while there exists some set $S$ where $p(S) \le \frac{(1+o(1))|S|}{\ln|S|}$. Subsequently, Harris and Srinivasan~\cite{HS16} established a tight asymptotic lower bound on polychromatic numbers.

\begin{theorem}[\cite{AKN95}, \cite{HS16}]\label{Harris}
For a finite set $S \subseteq \Z$, $p(S) \ge \frac{(1+o(1))|S|}{\ln|S|}$. Moreover,  there exists some set $S$ where $p(S) \le \frac{(1+o(1))|S|}{\ln|S|}$.
\end{theorem}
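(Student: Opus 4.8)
The plan is to prove the two halves separately: the lower bound $p(S)\ge(1+o(1))m/\ln m$ (due to \cite{HS16}) and the existence of a matching set (due to \cite{AKN95}), where $m=|S|$. It is convenient to recast the problem as hypergraph coloring. Let $H$ be the infinite hypergraph on $\Z$ whose edges are the translates $n+S$, $n\in\Z$. Since $x\in n+S$ exactly when $n\in x-S$, every vertex lies in precisely $m$ edges, so $H$ is $m$-uniform and $m$-regular, and an $S$-polychromatic $k$-coloring is exactly a $k$-coloring of $\Z$ in which every edge receives all $k$ colors. Two edges $n+S,n'+S$ intersect iff $n-n'\in S-S$, so each edge meets at most $|S-S|-1\le m^2-1$ others; this bounded overlap is what makes the local arguments below possible.

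For the lower bound, color each integer independently and uniformly from $[k]$ and, for an edge $e$, let $A_e$ be the event that $e$ misses some color, so $\prob{A_e}\le k(1-1/k)^m$ and $A_e$ is independent of all but at most $m^2$ other such events. The symmetric Lov\'asz Local Lemma produces an $S$-polychromatic coloring once $e\cdot k(1-1/k)^m\cdot m^2\le1$; taking logarithms, the dependency penalty $\ln(m^2)=2\ln m$ adds to the color cost $\ln k\approx\ln m$ and yields only $k\le(1+o(1))m/(3\ln m)$, the bound of \cite{AKN95}. The first-moment threshold, by contrast, sits at $m/k\approx\ln m$: there $e^{-m/k}\approx1/k$, so the expected number of colors missed by a fixed edge is about $k\cdot(1/k)=1$. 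To reach this tight threshold one refines the bad events to $B_{e,c}$, ``edge $e$ misses color $c$'', with $\prob{B_{e,c}}=(1-1/k)^m\approx e^{-m/k}$, and applies the sharpened criterion of \cite{HS16} (a cluster-expansion / partial-resampling form of the local lemma) which does not charge the full dependency degree inside the logarithm, so that the spurious $2\ln m$ term is absorbed and the condition reduces essentially to $e^{-m/k}\lesssim1/k$. Finally a compactness step promotes the finite statement to $\Z$: a valid coloring of $\Z$ exists iff one exists on every finite interval (K\"onig's lemma), and in fact, since the constraint has bounded window $\max S-\min S$, any valid coloring of $\Z$ yields a \emph{periodic} one by repeating the block between two identical windows.

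For the matching construction I would use the probabilistic method: fix $\epsilon>0$, set $k=\ceil{(1+\epsilon)m/\ln m}$, and let $S$ be a uniformly random $m$-subset of $\{1,\dots,N\}$ with $N$ a fixed polynomial in $m$. The governing estimate is the coupon-collector probability that a single translate is polychromatic: for a coloring with color densities $\rho_1,\dots,\rho_k$ this is at most $\prod_{c}(1-(1-\rho_c)^m)$, maximized at the balanced point $\rho_c=1/k$, where it is about $(1-e^{-m/k})^k\approx\exp(-k\,e^{-m/k})$. For our $k$ one has $e^{-m/k}=m^{-1/(1+\epsilon)}$, so the exponent $k\,e^{-m/k}$ grows like $m^{\epsilon/(1+\epsilon)}/\ln m\to\infty$, making each translate unlikely to be polychromatic. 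Heuristically, if the $\approx L$ translates inside $[1,L]$ behaved independently for a fixed coloring $\chi$, then $\prob{\chi\text{ is }S\text{-polychromatic on }[1,L]}\le\exp(-Lk\,e^{-m/k})$, and since $k\,e^{-m/k}\gg\ln k$ this beats the $k^L$ colorings of $[1,L]$ in a first-moment count, forcing some $S$ with no valid coloring and hence $p(S)\le(1+\epsilon)m/\ln m$; letting $\epsilon\to0$ along a sequence of $m$ gives the claimed family. This also explains why a structured example cannot work: an interval can be colored periodically to achieve $p(\{0,\dots,m-1\})=m$, so the construction must be ``spread out'' enough to be pinned at the coupon-collector barrier.

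The two places where the leading constant is genuinely decided are the main obstacles. In the lower bound it is the passage from constant $1/3$ to constant $1$: the plain local lemma overpays by the dependency factor $2\ln m$, and removing it requires the finer local-lemma machinery of \cite{HS16} rather than a one-line application. In the upper bound the obstacle is the correlation swept under the rug by the independence heuristic above: all translates $n+S$ use the \emph{same} random $S$, so the events ``$n+S$ is polychromatic'' are far from independent and the product bound $\exp(-Lk\,e^{-m/k})$ is not automatic. Making the first-moment count rigorous therefore requires either a concentration argument for the number of good translates as a function of $S$, or a careful choice of probability space for $S$ (together with a near-independent subfamily of translates) so that enough decay survives the union bound over colorings; this is the technical heart of \cite{AKN95}.
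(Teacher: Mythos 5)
The paper itself offers no proof of Theorem~\ref{Harris}: it is quoted verbatim from \cite{AKN95} and \cite{HS16}, so your proposal must be judged as a standalone proof, and as such it has two genuine gaps, located exactly at the two points where the theorem's content lies --- gaps you candidly flag, but candor does not close them. For the lower bound, the argument you actually carry out (uniform random $k$-coloring, bad events $A_e$, each edge meeting at most about $m^2$ others via $S-S$, symmetric Lov\'asz Local Lemma with condition $e\cdot k(1-1/k)^m\cdot m^2\le 1$) is correct but proves only $p(S)\ge(1+o(1))m/(3\ln m)$, i.e., the older bound of \cite{AKN95}. The entire content of the first half of the theorem is the removal of the constant $3$, and for that you write only that one ``applies the sharpened criterion of \cite{HS16}'': you never state that criterion, verify its hypotheses for the refined events $B_{e,c}$, or exhibit the computation by which the $2\ln m$ dependency penalty is absorbed. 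A proof whose decisive step is a citation of the result being proved is a roadmap, not a proof.

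The second half is in worse shape. Your exponent bookkeeping is right: with $k=\lceil(1+\epsilon)m/\ln m\rceil$ one has $e^{-m/k}\approx m^{-1/(1+\epsilon)}$, so $k\,e^{-m/k}\approx m^{\epsilon/(1+\epsilon)}/\ln m\to\infty$, which dominates $\ln k$ as needed to beat the $k^L$ colorings in a first-moment count. But the pivotal inequality, that for a fixed coloring $\chi$ of $[1,L]$ the probability over random $S$ that all interior translates are polychromatic is at most roughly $\exp(-Lk\,e^{-m/k})$, is asserted only under an independence heuristic that you yourself identify as false: every translate $n+S$ is pinned to the same random $S$, and the per-translate events are strongly correlated. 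Nothing in the proposal replaces the heuristic --- no second-moment or concentration argument for the number of failed translates, no decoupled subfamily of translates, no alternative explicit construction --- so no set $S$ with $p(S)\le(1+\epsilon)|S|/\ln|S|$ is ever produced. In short, your write-up is an accurate map of where the difficulties in \cite{AKN95} and \cite{HS16} live (the shift-hypergraph reformulation, the bounded-overlap structure, and the compactness/periodicity step between $\Z$ and finite intervals are all sound), but both halves of the theorem as stated remain unproved.
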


One motivation for studying polychromatic numbers is that they provide bounds for Tur\'an type problems (see for example \cite{AKS07}, \cite{Off09}, \cite{OS11}). Call $T \subseteq G$ a \textit{blocking set} for $S$ if $G \setminus T$ contains no translate of $S$, i.e. if for all $n \in G$, $n+S\nsubseteq G\setminus T$. A Tur\'an type problem asks for the smallest blocking set for a given set $S$. In the case where $S$ is finite and $G=\Z$, any blocking set is countably infinite, so we ask how small the density of a blocking set can be. Following the notation of Newman~\cite{New67}, (he worked in the setting of the natural numbers, but the definitions are equivalent), define for any set $T \subseteq \Z$ its \textit{upper density} $\overline{d}(T)$ and \textit{lower density} $\underline{d}(T)$ as 
\[\overline{d}(T) = \limsup_{n\rightarrow \infty} \frac{|T \cap [-n,n]|}{2n+1} \hspace{.25in} \text{ and } \hspace{.25in} \underline{d}(T) = \liminf_{n\rightarrow \infty} \frac{|T \cap [-n,n]|}{2n+1}. \]
If $\overline{d}(T)=\underline{d}(T)$, we call this quantity the \textit{density} of $T$ and denote it by $d(T)$. Define $\alpha(S)$ to be a measure of how small the density of a blocking set for $S$ can be. Let
\[\alpha(S) = \inf\{d(T):\text{ $T$ is a blocking set for $S$ and $d(T)$ exists}\}.\]

In Section~\ref{polcov}, we describe the relationship between polychromatic colorings and blocking sets, and prove Lemma~\ref{csps}.
\begin{lemma}\label{csps}
For any finite set $S \subseteq \Z$, $\alpha(S) \le 1/p(S)$.
\end{lemma}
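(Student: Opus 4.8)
The plan is to observe that each color class of an $S$-polychromatic coloring is itself a blocking set for $S$, and then to average: if $\Z$ is partitioned into $p(S)$ blocking sets whose densities exist, one of them must have density at most $1/p(S)$.

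First I would set $k = p(S)$ and fix an $S$-polychromatic coloring $\chi\colon \Z \to \{1,\dots,k\}$, which exists by the definition of $p(S)$. For each color $i$ write $C_i = \chi^{-1}(i)$. Since $\chi$ is $S$-polychromatic, every translate $n+S$ meets color $i$, i.e. $(n+S)\cap C_i \neq \emptyset$ for all $n \in \Z$; equivalently $\Z \setminus C_i$ contains no translate of $S$. Hence each $C_i$ is a blocking set for $S$. This is the heart of the argument, and it is essentially immediate from the definitions.

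The real difficulty is that an arbitrary coloring need not produce color classes whose densities exist, whereas $\alpha(S)$ ranges only over blocking sets with a genuine density. To get around this I would first reduce to a \emph{periodic} optimal coloring. Translating $S$ so that $\min S = 0$ and setting $m = \max S$, whether a coloring is $S$-polychromatic depends only on its behavior on windows of $m+1$ consecutive integers. I would encode colorings as bi-infinite directed walks in the finite digraph whose vertices are the $k^m$ colorings of a length-$m$ window and whose edges join the two length-$m$ subwindows of any length-$(m+1)$ window that assigns all $k$ colors to the coordinates indexed by $S$. An $S$-polychromatic $k$-coloring is exactly such a bi-infinite walk; since one exists, some vertex must repeat, and traversing the resulting closed walk periodically produces a periodic $S$-polychromatic coloring still using all $k$ colors. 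I expect this periodicity reduction to be the main technical point.

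For a periodic $k$-coloring of period $P$, each color class $C_i$ has a well-defined density equal to its frequency within one period, and since the classes partition $\Z$ we have $\sum_{i=1}^{k} d(C_i) = 1$. Therefore $\min_i d(C_i) \le 1/k = 1/p(S)$. The minimizing class is a blocking set for $S$ whose density exists and is at most $1/p(S)$, so it is admissible in the infimum defining $\alpha(S)$, giving $\alpha(S) \le 1/p(S)$ as claimed.
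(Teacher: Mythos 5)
Your proof is correct and follows essentially the same route as the paper: the color classes of an optimal $S$-polychromatic coloring are blocking sets, a pigeonhole-style argument extracts a \emph{periodic} $S$-polychromatic coloring with the same number of colors so that the class densities exist and sum to $1$, and averaging yields a class of density at most $1/p(S)$. The only difference is one of packaging: the paper finds two identically colored disjoint intervals of length exceeding the diameter of $S$ and repeats the segment between them, whereas you realize the same repetition idea as a closed walk in a de Bruijn-type digraph on window colorings; both reductions are the same underlying argument.
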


One of the main consequences of Theorem~\ref{ps34} concerns covering densities of sets of integers. Given a set $S \subseteq G$, we say $T\subseteq G$ is a \textit{complement set} for $S$ if $S+T = G$.  We say $S$ \textit{tiles} $G$ \textit{by translation} if it has a complement set $T$ such that if $s_1, s_2 \in S$, $t_1, t_2 \in T$, then $s_1 + t_1=s_2+t_2$ implies $s_1 = s_2$ and $t_1 = t_2$. We call such a complement set $T$ a \textit{tiling complement set} for $S$.  Note that the set $S$ tiles $G$ by translation if all the translates $S + t$, $t \in T$, are disjoint and every $n \in G$ is an element of some translate $S+t$. In this paper we only consider tilings by translation, so if $S$ tiles $G$ by translation with tiling complement set $T$ we will simply say $S$ tiles $G$ and write $G=S \oplus T$.

Again, our primary interest will be the case where $G= \Z$ and $S$ is finite. For example, if $S=\{0,1,5\}$, then $S$ tiles $\Z$ with complement set $T=\{3n:n\in \Z\}$. However $S=\{0,1,3\}$ does not tile $\Z$. Newman~\cite{New77} proved necessary and sufficient conditions for a finite set $S$ to tile $\Z$ if $|S|$ is a power of a prime. 

\begin{theorem}[Newman \cite{New77}]\label{newtile77}
Let $S=\{s_1, \ldots, s_k\}$ be distinct integers with $|S|=p^\alpha$ where $p$ is prime and $\alpha$ is a positive integer.  For $1\le i<j\le k$ let $p^{e_{ij}}$ be the highest power of $p$ that divides $s_i-s_j$.  Then $S$ tiles $\Z$ if and only if $|\{e_{ij}: 1 \le i <j \le k\}| \le \alpha$.
\end{theorem}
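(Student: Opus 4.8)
The plan is to recast tiling as a cyclotomic divisibility condition on mask polynomials and then match it to the valuation data $e_{ij}$. First I would use the standard reductions: a finite set tiles $\Z$ if and only if it tiles some cyclic group $\Z_N$, and a tiling of $\Z_N$ lifts to one of any multiple of $N$; also $S$ tiles if and only if $dS$ does, so dividing out $p^{f}$ with $f=\min_{i<j}e_{ij}$ we may assume $\min_{i<j}e_{ij}=0$, and after translation $S\subseteq\{0,1,2,\dots\}$. Writing $S(x)=\sum_i x^{s_i}$, one has $S\oplus T=\Z_N$ exactly when $S(x)T(x)\equiv 1+x+\cdots+x^{N-1}\pmod{x^N-1}$; since $\tfrac{x^N-1}{x-1}=\prod_{d\mid N,\,d>1}\Phi_d(x)$ is a product of distinct irreducibles, this is equivalent to $|S|\,|T|=N$ together with the requirement that for each $d\mid N$, $d>1$, either $\Phi_d(x)\mid S(x)$ or $\Phi_d(x)\mid T(x)$.

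Let $E=\{e_{ij}:i<j\}$ and $D=\{a\ge 1:\Phi_{p^a}(x)\mid S(x)\}$. Two facts hold with no tiling assumption. Because the distinct polynomials $\Phi_{p^a}$, $a\in D$, all divide $S(x)$ and $\Phi_{p^a}(1)=p$, evaluating at $1$ gives $p^{|D|}\mid S(1)=p^{\alpha}$, so $|D|\le\alpha$. And clustering $S$ by residue modulo $p^{\min E+1}$ splits it into at most $p$ pieces, each with one fewer distinct valuation; inducting on $|E|$ yields $|S|\le p^{|E|}$, i.e.\ $|E|\ge\alpha$. The bridge between these, verified by reducing $S(x)$ modulo $\Phi_{p^a}(x)$, is that $\Phi_{p^a}(x)\mid S(x)$ holds if and only if $S$ is \emph{balanced at scale $a$}: for every residue $c\pmod{p^{a-1}}$ the number of elements of $S$ lying in each of the $p$ classes $c,\,c+p^{a-1},\dots,\,c+(p-1)p^{a-1}\pmod{p^a}$ is the same.

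For sufficiency, assume $|E|\le\alpha$; with $|E|\ge\alpha$ this forces $|E|=\alpha$, say $E=\{f_1<\cdots<f_\alpha\}$. I would first prove a rigidity lemma: equality in $|S|\le p^{|E|}$ forces every cluster to split into exactly $p$ equal sub-clusters at each of its scales, so $S$ is balanced at every scale $f_l+1$, whence $\Phi_{p^{f_l+1}}(x)\mid S(x)$ for all $l$ by the bridge. Then I take the explicit complement $T=\{t\in[0,p^K):$ the $p$-adic digits of $t$ at positions $f_1,\dots,f_\alpha$ all vanish$\}$, whose mask polynomial factors as $\prod_{0\le i\le K-1,\,i\notin E}\Phi_{p^{i+1}}(x)$. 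The sizes match at $x=1$, and at any other $p^K$-th root of unity $\zeta$, of order $p^a$: if $a-1\in E$ then $\Phi_{p^a}\mid S$ gives $S(\zeta)=0$, while if $a-1\notin E$ then $\Phi_{p^a}$ is one of the displayed factors of $T$, giving $T(\zeta)=0$. Thus $S(x)T(x)$ agrees with $1+\cdots+x^{p^K-1}$ at every $p^K$-th root of unity and hence modulo $x^{p^K}-1$, so $S\oplus T=\Z_{p^K}$ and $S$ tiles.

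For necessity, assume $S$ tiles and take the period $N$ with $K:=v_p(N)\ge\max E+1$. For each $a\in\{1,\dots,K\}$ exactly one of $\Phi_{p^a}\mid S$, $\Phi_{p^a}\mid T$ holds, since $\Phi_{p^a}$ occurs with multiplicity one in $\tfrac{x^N-1}{x-1}$; comparing $v_p(S(1))=\alpha$ and $v_p(T(1))=K-\alpha$ then forces $|D|=\alpha$ exactly. It remains to prove the bridge inclusion $\{f+1:f\in E\}\subseteq D$---that every realized valuation $f$ forces $\Phi_{p^{f+1}}\mid S$---for then $|E|\le|D|=\alpha$ and the theorem follows. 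This inclusion is the heart of the matter: it converts the global tiling hypothesis into the local statement that a tile can never split unevenly at any scale. I would prove it by induction on $\alpha$, reducing to $\min E=0$ and showing that a tile meeting at least two residue classes modulo $p$ must be balanced modulo $p$ (so $\Phi_p\mid S$), then peeling off this scale. I expect this balancing step to be the main obstacle: the one-scale convolution identity supplied by the tiling is automatically satisfied as soon as the complement is balanced modulo $p$, so it places no constraint on $S$ by itself, and the proof must instead exploit the rigidity that the exactly-one dichotomy propagates between consecutive scales.
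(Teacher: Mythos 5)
First, a point of reference: the paper does not prove this statement at all --- it is quoted as Newman's theorem and cited to \cite{New77} --- so your proposal cannot be compared to a ``paper proof'' and must stand on its own. Your sufficiency half does stand: the reduction to cyclic groups and to $\min E=0$, the inequality $|E|\ge\alpha$ by clustering, the equality-case rigidity forcing balance at every scale $f_l+1$, the bridge between balance and $\Phi_{p^a}(x)\mid S(x)$, and the explicit digit complement $T(x)=\prod_{i\notin E}\Phi_{p^{i+1}}(x)$ checked at roots of unity are all correct (this is the de Bruijn/Coven--Meyerowitz machinery specialized to prime powers, and the details you omit are routine). Likewise the tiling-implies-$|D|=\alpha$ count via $v_p(S(1))$ and $v_p(T(1))$ is fine; your parenthetical ``exactly one, since multiplicity one'' would itself need an argument, but your valuation count makes it unnecessary.

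The genuine gap is in the necessity direction, exactly where you flag it: you never prove the inclusion $\{f+1:f\in E\}\subseteq D$ for tiles, and without it the chain $|E|\le|D|=\alpha$ collapses. You correctly observe that the one-scale convolution identity is vacuous once $T$ is balanced, but ``exploit the rigidity that the dichotomy propagates between consecutive scales'' is a hope, not an argument. Worse, the induction you sketch is structurally flawed: even if you could show a tile meeting two residue classes mod $p$ is balanced mod $p$, the residue classes of a tile are not themselves known to be tiles, so you cannot ``peel off this scale'' and apply the inductive hypothesis to them. The repair is to run the induction on the hypothesis the tiling actually gives you, namely $|D|=\alpha$, rather than on tiling. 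Lemma: if $|D|=v_p(|S|)=\alpha$ then $E\subseteq\{a-1:a\in D\}$. Proof sketch: let $f=\min E$; no scale $a\le f$ can be balanced, so $D\subseteq[f+1,\infty)$. If $f+1\notin D$, then every $a\in D$ satisfies $a>f+1$, and since classes mod $p^{a-1}$ refine classes mod $p^{f+1}$, balance of $S$ at such a scale passes to each nonempty class $S_i$ of $S$ mod $p^{f+1}$; hence each $S_i$ is balanced at all $\alpha$ scales of $D$, so $p^\alpha\mid|S_i|$ by the $v_p(S_i(1))$ bound --- impossible, as there are at least two nonempty classes summing to $p^\alpha$. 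So $f+1\in D$; balance there makes each $S_i$ have size $p^{\alpha-1}$ with scale set exactly $D\setminus\{f+1\}$, cross-class valuations equal $f$ exactly, and the induction closes. With this lemma, tiling $\Rightarrow|D|=\alpha\Rightarrow|E|\le\alpha$, completing your proof; as written, however, the necessity direction is not proved.
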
 

Later Coven and Meyerowitz~\cite{CM99} gave necessary and sufficient conditions for $S$ to tile $\Z$ when $|S|= p_1^{\alpha_1}p_2^{\alpha_2}$, where $p_1$ and $p_2$ are primes. The general question is still open. Kolountzakis and Matolcsi~\cite{KM09} and Amiot~\cite{Ami16} have published recent work motivated by what are called rhythmic tilings in music.

If a finite set $S$ tiles $\Z$, it has a complement set of density $1/|S|$. Following Newman~\cite{New67}, we define the \textit{codensity} of a set $S$, denoted $c(S)$, as a measure of how small the density of a complement set can be. Let
\[c(S) = \inf\{d(T): S+T=\Z \text{ and $d(T)$ exists}\}.\]

%   For other sets, we ask how small the density of a complement set can be. Again following Newman~\cite{New67}, we define the \textit{codensity} of a set $S$, denoted $c(S)$, as
%\[c(S) = \inf\{d(T): S+T=\Z \text{ and $d(T)$ exists}\}.\]

We are interested in the largest codensities for sets of a given cardinality.  Define 
\[c_k = \sup_{\{S:|S|=k\}}c(S).\]  

An example of a complement set for $\{0,1,3\}$ is $\{t \in \Z: t \equiv 0 \text{ or } 1 \pmod{5}\}$, so $c(\{0,1,3\}) \le 2/5$. The following theorem and conjecture on $c_4$ are due to Newman.

\begin{theorem}[Newman ~\cite{New67}]\label{newt}
\ 
\begin{itemize}
\item $c(\{0,1,3\}) = 2/5$.
\item $c_3 = 2/5$.
\item $c(\{0,1,2,4\}) = 1/3$.
\end{itemize}
\end{theorem}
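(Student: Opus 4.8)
The three statements split into upper bounds (exhibiting economical complements) and matching lower bounds (showing no complement can be sparser). I would first record the dual description that drives both directions: a set $T$ is a complement for $S$ (that is, $S+T=\Z$) if and only if $C:=\Z\setminus T$ contains no translate of the reflected set $-S$, since $n\in S+T$ fails exactly when $\{n\}-S\sse C$. For $S=\{0,1,3\}$ the reflected pattern is $\{0,2,3\}$, for $S=\{0,1,2,4\}$ it is $\{0,2,3,4\}$, and in general $c(S)$ is $1$ minus the supremum density of a set avoiding all translates of $-S$. The upper bounds are then immediate from explicit periodic sets: $T=\{t:t\equiv 0,1 \pmod 5\}$ is a complement for $\{0,1,3\}$ of density $2/5$ (given in the text), and $T=3\Z$ is a complement for $\{0,1,2,4\}$ of density $1/3$ (as $\{0,1,2\}+3\Z=\Z$). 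This gives $c(\{0,1,3\})\le 2/5$ and $c(\{0,1,2,4\})\le 1/3$, and since $\{0,1,3\}$ has cardinality $3$ it also gives the easy bound $c_3\ge 2/5$.

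For the lower bounds I would analyze the run structure of $C=\Z\setminus T$. Write $\Z$ as alternating maximal runs of $C$ and of $T$, with $C$-runs of lengths $c_1,c_2,\dots$ and intervening $T$-runs of lengths $\tau_1,\tau_2,\dots$. For $\{0,1,3\}$, avoiding translates of $\{0,2,3\}$ forces (i) every $c_i\le 3$ (a $C$-run of length $4$ already contains a translate $\{p,p+2,p+3\}$), and (ii) when two $C$-runs are separated by a single element of $T$ (that is $\tau_i=1$), a translate of $\{0,2,3\}$ straddles that element as soon as the later run has length $\ge 2$; hence $\tau_i=1$ forces $c_{i+1}\le 1$. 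Pairing each $T$-run $\tau_i$ with the following $C$-run $c_{i+1}$, these constraints bound every ratio $c_{i+1}/\tau_i$ by $3/2$ (the extreme case being $c=3,\tau=2$), so by the mediant inequality $\sum c_i\le \tfrac32\sum\tau_i$ over any long interval, up to a bounded boundary error. Thus $\underline d(T)\ge \frac{1}{1+3/2}=2/5$, giving $c(\{0,1,3\})\ge 2/5$. The identical argument for $\{0,1,2,4\}$ (runs $c_i\le 4$, and $\tau_i=1$ forcing $c_{i+1}\le 2$) bounds each ratio by $2$, yielding $\underline d(T)\ge 1/3$. The extremal periodic patterns $\dots T^2C^3T^2C^3\dots$ and $\dots T^1C^2T^1C^2\dots$ show both bounds are tight.

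The substantial remaining step is the upper bound $c_3\le 2/5$, i.e.\ that every $3$-element $S$ admits a complement of density at most $2/5$. After translating so $0\in S$ and dividing out the gcd of the differences (replacing $S$ by $S/\gcd$ does not decrease $c$, so it suffices to treat the coprime case), the clean situation is $|S \bmod 5|=3$: then $S\bmod 5$ is a $3$-subset of $\Z_5$, its difference set is all of $\Z_5\setminus\{0\}$, so it has a $2$-element complement $T_0$ in $\Z_5$, and $T=T_0+5\Z$ is a complement of density $2/5$. The obstacle is the case $|S\bmod 5|=2$ (forced by coprimality to be the only alternative), where no period-$5$ complement of density $2/5$ exists; for instance $\{0,1,6\}$ instead admits the period-$11$ complement $\{0,3,4,7\}+11\Z$ of density $4/11$. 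Handling these cases uniformly is exactly Newman's general bound $c_k\le \frac{k-1}{2k-1}$, proved by selecting a modulus $p$ with $|S\bmod p|=3$ together with an economical covering of $\Z_p$ by translates of $S\bmod p$; this additive-combinatorial covering step is where the real work lies, and I would invoke Newman's argument here rather than reprove it.
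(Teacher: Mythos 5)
The first thing to note is that the paper never proves this theorem: it is imported wholesale from Newman \cite{New67}, and the paper's own machinery could not reprove it anyway, since Lemmas~\ref{csps} and~\ref{alphc} only yield bounds of the form $c(S)\le 1/p(S)$, and $p(\{0,1,3\})=2$ stops that route at $1/2$, short of $2/5$. So your proposal must stand on its own, and most of it does. Your duality (that $T$ is a complement for $S$ if and only if $\Z\setminus T$ contains no translate of $-S$) is exactly the paper's Lemma~\ref{CompTur}. The explicit periodic sets give $c(\{0,1,3\})\le 2/5$ and $c(\{0,1,2,4\})\le 1/3$, and your run-length analysis of $C=\Z\setminus T$ is correct in both cases: for $\{0,2,3\}$ the constraints ($C$-runs have length at most $3$; a $T$-run of length $1$ forces the next $C$-run to have length at most $1$, since otherwise $\{q-1,q+1,q+2\}\subseteq C$) do bound each ratio $c_{i+1}/\tau_i$ by $3/2$, and similarly by $2$ for $\{0,2,3,4\}$; the boundary errors are $O(1)$ because $C$-runs are bounded. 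This settles the first and third bullets completely and gives $c_3\ge 2/5$. (One small quibble: in your ``clean'' case $|S\bmod 5|=3$, the relevant property is not that the difference set of $S\bmod 5$ fills $\Z_5\setminus\{0\}$, but that the $2$-element set $\Z_5\setminus(S\bmod 5)$ can be translated off itself, which holds since $5>3$; the conclusion that a $2$-element complement exists in $\Z_5$ is nonetheless correct.)

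The genuine gap is the upper bound $c_3\le 2/5$, which is the real content of the second bullet. You prove it only when $|S\bmod 5|=3$; in the remaining coprime case $|S\bmod 5|=2$ you ``invoke Newman's argument'' in the form of a general bound $c_k\le\frac{k-1}{2k-1}$. But the statement you are asked to prove \emph{is} Newman's theorem: at $k=3$ the invoked inequality is literally $c_3\le 2/5$, so the appeal is circular as a proof of this statement rather than a citation of an independent tool. (Moreover, that inequality cannot be correct as a blanket statement for all $k$: at $k=2$ it would assert $c_2\le 1/3$, whereas every two-element set has $c(S)=1/2$.) What is missing is a construction showing that \emph{every} coprime three-element set with $|S\bmod 5|=2$ admits, for some modulus $m$, a covering of $\Z_m$ by at most $\frac{2}{5}m$ translates of $S\bmod m$ (or some other complement of density at most $2/5$); your $\{0,1,6\}$ with the mod-$11$ complement $\{0,3,4,7\}$ is a single instance, not an argument. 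That uniform construction is exactly the hard part of Newman's proof, and without it the second bullet remains unproven.
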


\begin{conjecture}[Newman~\cite{New67}]\label{c4conj}
 $c_4 = 1/3$. 
 \end{conjecture}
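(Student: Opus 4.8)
The plan is to prove $c_4 = 1/3$ in two halves: the lower bound $c_4 \ge 1/3$ and the upper bound $c_4 \le 1/3$. The lower bound is immediate from the already-stated evaluation $c(\{0,1,2,4\}) = 1/3$ in Theorem~\ref{newt}: since $\{0,1,2,4\}$ is a $4$-element set, $c_4 = \sup_{|S|=4} c(S) \ge c(\{0,1,2,4\}) = 1/3$. So the real work is the upper bound, namely that every $4$-element set $S \subseteq \Z$ satisfies $c(S) \le 1/3$.

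The key reduction is to recognize that codensity is the complement-set analogue of the blocking-set quantity $\alpha$, and that the two are linked by reflection. First I would observe that $T$ is a complement set for $S$ (that is, $S + T = \Z$) if and only if $(n - S) \cap T \ne \emptyset$ for every $n \in \Z$. Since $\{n - S : n \in \Z\}$ is exactly the family of translates of $-S$, this says precisely that $T$ meets every translate of $-S$, i.e. that $T$ is a blocking set for $-S$. As the identity map is a bijection between complement sets for $S$ and blocking sets for $-S$ and trivially preserves densities, taking infima over sets of prescribed density yields the exact identity $c(S) = \alpha(-S)$.

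Next I would use the fact that the polychromatic number is invariant under the reflection automorphism $x \mapsto -x$ of $\Z$, so that $p(-S) = p(S)$. By Theorem~\ref{ps34}, $p(S) \ge 3$ whenever $|S| = 4$, hence $p(-S) \ge 3$ as well. Applying Lemma~\ref{csps} to the set $-S$ then gives $\alpha(-S) \le 1/p(-S) \le 1/3$. Chaining this with the identity from the previous step yields $c(S) = \alpha(-S) \le 1/3$ for every $4$-element $S$, so $c_4 \le 1/3$. Combined with the lower bound, this proves $c_4 = 1/3$.

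The genuinely hard step is not visible at this stage of the deduction: it is packaged entirely inside Theorem~\ref{ps34}, the assertion that every $4$-element set has polychromatic number at least $3$, on which the dual reduction leans completely. Within the present argument the only points needing care are the reflection bookkeeping—getting the direction of $n - S$ versus $m + (-S)$ correct—and checking that the correspondence between complement sets and blocking sets genuinely respects density, so that the infimum defining $c(S)$ matches the one defining $\alpha(-S)$. In particular, one should note that the smallest color class of a periodic $(-S)$-polychromatic $3$-coloring furnishes an explicit complement set of density at most $1/3$ whose density actually exists, which is what makes the infimum in the definition of $c(S)$ realize a value that is at most $1/3$.
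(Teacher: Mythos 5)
Your proposal is correct and follows essentially the same route as the paper: the lower bound comes from Newman's evaluation $c(\{0,1,2,4\})=1/3$, and the upper bound chains Theorem~\ref{ps34} with Lemma~\ref{csps} and the complement-set/blocking-set duality (the paper's Lemma~\ref{CompTur} and Lemma~\ref{alphc}). The only cosmetic difference is bookkeeping of the reflection: the paper puts the minus sign on $T$ (so $c(S)=\alpha(S)$), while you put it on $S$ (so $c(S)=\alpha(-S)$) and compensate with $p(-S)=p(S)$; the two are trivially equivalent.
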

 
Conjecture~\ref{c4conj} is stated and attributed to Newman by Weinstein~\cite{Wei76}, who proved that $c_4 < .339934$. Based on a computer search, Bollob\'as, Janson, and Riordan \cite{BJR11} confirmed Newman's conjecture for sets with diameter at most 22, where the \textit{diameter} of a nonempty finite set of integers is defined to be the difference between the largest and smallest elements in the set.  They also conjectured that $c_5 = 3/11$ and $c_6  = 1/4$ (See Remark 5.6 and Question 5.7 in \cite{BJR11}.  Note they use different notation).

In Section~\ref{polcov} we prove the following lemma relating blocking sets and complement sets.
\begin{lemma}\label{alphc}
For any finite set $S \subseteq \Z$, $c(S)= \alpha(S)$. 
\end{lemma}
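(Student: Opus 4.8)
The plan is to exhibit a density-preserving bijection between the blocking sets for $S$ and the complement sets for $S$, namely the reflection map $T \mapsto -T$ where $-T = \{-t : t \in T\}$. Once this correspondence is in place, the two infima defining $\alpha(S)$ and $c(S)$ range over identical collections of densities, which forces $\alpha(S) = c(S)$.

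First I would unwind the two defining conditions into a common form. By definition $T$ is a blocking set for $S$ precisely when, for every $n \in \Z$, the translate $n + S$ is not contained in $\Z \setminus T$; equivalently, $(n + S) \cap T \neq \emptyset$ for all $n \in \Z$. On the other side, $-T$ being a complement set for $S$ means $S + (-T) = \Z$, i.e. every $m \in \Z$ can be written as $s - t$ with $s \in S$ and $t \in T$. Rewriting $t = s - m$, this says that for every $m$ there is some $s \in S$ with $s - m \in T$, i.e. $((-m) + S) \cap T \neq \emptyset$. Letting $n = -m$ range over all of $\Z$, this is exactly the blocking condition above. Hence $T$ is a blocking set for $S$ if and only if $-T$ is a complement set for $S$.

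Next I would observe that reflection preserves density. For each $n$ we have $(-T) \cap [-n,n] = -(T \cap [-n,n])$, since $-t \in [-n,n]$ if and only if $t \in [-n,n]$; therefore $|(-T) \cap [-n,n]| = |T \cap [-n,n]|$ for all $n$. Consequently $\overline{d}(-T) = \overline{d}(T)$ and $\underline{d}(-T) = \underline{d}(T)$, so $d(-T)$ exists exactly when $d(T)$ does, and in that case $d(-T) = d(T)$.

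Combining the two steps, $T \mapsto -T$ is an involution carrying the blocking sets for $S$ that have a well-defined density onto the complement sets for $S$ that have a well-defined density, and vice versa, with the density value unchanged. Therefore the two sets
\[ \{d(T) : T \text{ a blocking set for } S,\ d(T) \text{ exists}\} \quad\text{and}\quad \{d(T) : S+T = \Z,\ d(T) \text{ exists}\} \]
coincide, and taking infima gives $\alpha(S) = c(S)$. There is no substantial obstacle beyond keeping the sign conventions straight; the one point requiring care is verifying that the reflection maps one family exactly \emph{onto} the other bijectively, so that neither infimum is inadvertently taken over a strictly larger collection than the other.
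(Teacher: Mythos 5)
Your proof is correct and follows essentially the same route as the paper: the paper's Lemma~\ref{CompTur} establishes exactly your reflection correspondence $T \leftrightarrow -T$ between complement sets and blocking sets, and the paper's proof of Lemma~\ref{alphc} then invokes the (asserted) invariance of density under reflection. The only difference is that you prove both ingredients inline --- in particular, your explicit verification that $|(-T)\cap[-n,n]| = |T\cap[-n,n]|$ fills in a detail the paper leaves to the reader.
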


Theorem~\ref{ps34}, along with Lemmas~\ref{csps} and \ref{alphc}, suffice to resolve Conjecture~\ref{c4conj}.

\begin{theorem}\label{c413}
 $c_4=1/3$.
\end{theorem}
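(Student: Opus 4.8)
The plan is to establish the two matching inequalities $c_4 \le 1/3$ and $c_4 \ge 1/3$ and combine them. Essentially all of the real content lives in the upper bound, which chains together results already available; the lower bound is a single explicit witness.

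For the upper bound, I would fix an arbitrary $S \subseteq \Z$ with $|S| = 4$ and assemble the three prior results. By Theorem~\ref{ps34} we have $p(S) \ge 3$. Feeding this into Lemma~\ref{csps} gives $\alpha(S) \le 1/p(S) \le 1/3$, and Lemma~\ref{alphc} identifies $\alpha(S)$ with $c(S)$, so $c(S) \le 1/3$. Since $S$ was an arbitrary $4$-element subset of $\Z$, taking the supremum over all such $S$ yields $c_4 \le 1/3$.

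For the lower bound, I would exhibit a single $4$-element set whose codensity attains $1/3$. Theorem~\ref{newt} supplies exactly this, namely $c(\{0,1,2,4\}) = 1/3$. Since $\{0,1,2,4\}$ has cardinality $4$, it is one of the sets over which the supremum defining $c_4$ ranges, whence $c_4 \ge c(\{0,1,2,4\}) = 1/3$. Combining the two bounds gives $c_4 = 1/3$.

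I do not expect any genuine obstacle in this final step: the difficulty is entirely absorbed into the already-assumed Theorem~\ref{ps34}, and once every $4$-element set is known to admit a polychromatic $3$-coloring, the present theorem is pure bookkeeping, with the matching lower bound handed to us by Newman's example. The only point requiring care is the invocation of Lemma~\ref{alphc}, since it is the equivalence $c(S) = \alpha(S)$ that licenses transferring the polychromatic bound from the coloring language into the covering-density language.
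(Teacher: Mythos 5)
Your proposal is correct and is essentially identical to the paper's own proof: both establish the lower bound via Newman's result $c(\{0,1,2,4\})=1/3$ from Theorem~\ref{newt}, and both obtain the upper bound for an arbitrary $4$-element set $S$ by chaining Theorem~\ref{ps34} with Lemmas~\ref{csps} and \ref{alphc} to get $c(S)=\alpha(S)\le 1/p(S)\le 1/3$. No gaps; the bookkeeping you describe is exactly what the paper does.
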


\begin{proof}
Theorem~\ref{newt} implies $c(\{0,1,2,4\})=1/3$, so it remains to show that for any other set $S$ with cardinality four, $c(S) \le 1/3$.  Let $S \subseteq \Z$ have four elements.  Then Theorem~\ref{ps34} implies that $p(S) \ge 3$, and by Lemmas~\ref{csps} and \ref{alphc},
\[c(S) = \alpha(S) \le 1/p(S) \le 1/3.\]
\end{proof}

In Subsection~\ref{tiliff} we consider the relationship between polychromatic colorings and tilings.  The main result is Theorem~\ref{polytile}, which states that a set $S$ tiles an abelian group $G$ by translation if and only if $p(S)=|S|$. 

Finally, in Section~\ref{z2} we turn our attention to polychromatic numbers and tilings for finite sets in $\Z^d$.  We begin by proving in Theorem~\ref{hsd} that the bound of Theorem~\ref{Harris} applies to subsets of  $\Z^d$. We then show that if a set of points in $\Z^d$ is collinear, determining its polychromatic number is equivalent to determining the polychromatic number of a specific projection of this set into $\Z$. Theorem~\ref{polytile} implies that a set $S$ tiles $\Z^d$ if and only if $p_{\Z_d}(S)=|S|$, so we use this to restate some well-known results on tilings of $\Z^d$ by finite sets in the language of polychromatic colorings.   We conclude by applying these results to determine polychromatic numbers of sets with cardinality 3 and 4 in $\Z^d$.

\section{Sets of Cardinality Four}\label{ss4}
In this section we prove that every set of four integers has polychromatic number at least 3. We begin by stating some general lemmas that reduce the problem of finding an $S$-polychromatic coloring of $\Z$ to finding an $S$-polychromatic coloring of $\Z_m= \{0,1,\ldots,m-1\}$ for a specific choice of $m$.

%For any coloring $\chi$ and any set $S$, let $\chi(S) = \{\chi(s) : s \in S\}$.

\begin{lemma}\label{hom}
If $G$ and $H$ are abelian groups and $\phi: G \to H$ is a homomorphism, then for all $S \subseteq G$, \[p_G(S) \ge p_H(\phi(S)).\]
\end{lemma}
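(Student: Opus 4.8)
The plan is to pull back a polychromatic coloring from $H$ to $G$ along $\phi$. Let $k = p_H(\phi(S))$ and fix a $\phi(S)$-polychromatic coloring $\chi \colon H \to \{1,\dots,k\}$ that uses all $k$ colors; such a coloring exists by the definition of $p_H$. I would then define the induced coloring $\chi' \colon G \to \{1,\dots,k\}$ by setting $\chi'(g) = \chi(\phi(g))$, i.e. coloring each element of $G$ with the color that its image under $\phi$ receives in $H$. The goal is to show that $\chi'$ witnesses $p_G(S)\ge k$.

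The key step is to verify that $\chi'$ is $S$-polychromatic. I would take an arbitrary translate $n+S$ of $S$ in $G$ and examine the set of colors it receives:
\[
\{\chi'(n+s) : s \in S\} = \{\chi(\phi(n+s)) : s \in S\} = \{\chi(\phi(n)+\phi(s)) : s \in S\},
\]
where the last equality is exactly where I use that $\phi$ is a homomorphism. This set equals $\{\chi(\phi(n)+t) : t \in \phi(S)\}$, which is precisely the set of colors appearing on the translate $\phi(n)+\phi(S)$ of $\phi(S)$ in $H$.

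Since $\chi$ is $\phi(S)$-polychromatic, every translate of $\phi(S)$ receives all $k$ colors, and in particular so does $\phi(n)+\phi(S)$. Hence $n+S$ receives all $k$ colors under $\chi'$. As $n$ was arbitrary, $\chi'$ is an $S$-polychromatic coloring of $G$ using $k$ colors, giving $p_G(S) \ge k = p_H(\phi(S))$, as required.

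I do not expect a serious obstacle; the only points to watch concern the fact that $\phi$ need be neither injective nor surjective. Non-injectivity merely allows $\phi(S)$ to have fewer elements than $S$ (several elements of a translate can collapse to the same image), but this does no harm because the argument only needs the \emph{set} of colors on $n+S$—not a count with multiplicity—to be all of $\{1,\dots,k\}$. Surjectivity of $\phi$ is likewise unnecessary: I never require that every translate of $\phi(S)$ in $H$ arise from a translate of $S$, only that each translate of $S$ map into \emph{some} translate of $\phi(S)$, which holds automatically.
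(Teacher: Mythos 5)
Your proof is correct and is essentially identical to the paper's: both pull back a $\phi(S)$-polychromatic coloring of $H$ along $\phi$ and observe that each translate $n+S$ maps onto the translate $\phi(n)+\phi(S)$, using the homomorphism property. Your closing remarks on why injectivity and surjectivity are unneeded are accurate, just made explicit where the paper leaves them implicit.
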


\begin{proof}
Let $S \subseteq G$ and let $\chi'$ be a $\phi(S)$-polychromatic coloring of $H$ with $p_H(\phi(S))$ colors. Define the coloring $\chi$ on $G$ such that $\chi(g) = \chi'(\phi(g))$.  Consider a translate $g+S$ of the set $S$. Since
\[\chi(g+S) = \chi'(\phi(g+S)) = \chi'(\phi(g) + \phi(S)),\]
 and $\phi(g) + \phi(S) \subseteq H$ is a translate of $\phi(S)$, $\chi(g+S)$ contains all $p_H(\phi(S))$ colors, and $\chi$ is $S$-polychromatic.
\end{proof}

\begin{corollary}\label{iso}
If $G$ and $H$ are abelian groups and $\phi: G \to H$ is an isomorphism, then for all $S \subseteq G$, \[p_G(S) = p_H(\phi(S)).\]
\end{corollary}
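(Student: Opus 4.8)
The plan is to derive both inequalities from Lemma~\ref{hom}, exploiting the fact that an isomorphism can be applied in either direction. Since an isomorphism is in particular a homomorphism, Lemma~\ref{hom} applied directly to $\phi$ and $S$ immediately yields the inequality
\[p_G(S) \ge p_H(\phi(S)).\]
This handles one direction at no cost.

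For the reverse inequality, the key observation is that because $\phi$ is an isomorphism, its inverse $\phi^{-1}: H \to G$ exists and is itself a homomorphism. I would therefore apply Lemma~\ref{hom} a second time, now with $\phi^{-1}$ playing the role of the homomorphism and $\phi(S) \subseteq H$ playing the role of the set. This gives
\[p_H(\phi(S)) \ge p_G\bigl(\phi^{-1}(\phi(S))\bigr).\]
Because $\phi$ is a bijection, $\phi^{-1}(\phi(S)) = S$, so the right-hand side is simply $p_G(S)$, and the inequality becomes $p_H(\phi(S)) \ge p_G(S)$. Combining the two displayed inequalities yields $p_G(S) = p_H(\phi(S))$, as required. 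There is no real obstacle here beyond recording the standard fact that the inverse of a group isomorphism is again a homomorphism; the whole argument is just a symmetric double application of Lemma~\ref{hom}.
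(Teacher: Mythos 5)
Your proof is correct and follows exactly the route the paper intends: the paper states Corollary~\ref{iso} without proof precisely because it follows from applying Lemma~\ref{hom} to both $\phi$ and its inverse $\phi^{-1}$, which is what you do. Your write-up, including the observation that $\phi^{-1}$ is itself a homomorphism and that $\phi^{-1}(\phi(S)) = S$ by injectivity, supplies the details the paper leaves implicit.
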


\begin{lemma}\label{ambient}
If $H$ is a subgroup of an abelian group $G$, and $S \subseteq H$, then $p_H(S) = p_G(S)$.
\end{lemma}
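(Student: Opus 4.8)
The plan is to establish the two inequalities $p_H(S) \ge p_G(S)$ and $p_G(S) \ge p_H(S)$ separately. The first is immediate from Lemma~\ref{hom}: the inclusion $\iota : H \to G$ is a homomorphism and $\iota(S) = S$, so applying the lemma with domain $H$ and codomain $G$ yields $p_H(S) \ge p_G(\iota(S)) = p_G(S)$. Concretely this just records the fact that restricting any $S$-polychromatic coloring of $G$ to the subgroup $H$ stays $S$-polychromatic, since for $h \in H$ every translate $h+S$ already lies inside $H$ and so keeps all of its colors.

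For the reverse inequality I would construct an $S$-polychromatic coloring of $G$ from one of $H$. Let $\chi_H$ be an $S$-polychromatic coloring of $H$ using $p_H(S)$ colors, and fix a set of coset representatives for $G/H$, so that each $g \in G$ is written uniquely as $g = r + h$ with $r$ a representative and $h \in H$. Define $\chi_G(r+h) = \chi_H(h)$, i.e. transplant a copy of $\chi_H$ onto every coset of $H$. This palette has exactly $p_H(S)$ colors, so it suffices to check that every translate of $S$ in $G$ receives all of them.

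The key step, and the only place requiring care, is verifying that no translate straddles two cosets. This is exactly where the hypothesis $S \subseteq H$ does the work: for any $g = r + h_0$ we have $g + S = r + (h_0 + S)$, and since $S \subseteq H$ the set $h_0 + S$ lies in $H$, so the entire translate $g+S$ sits in the single coset $r + H$. By construction $\chi_G$ on that coset is a copy of $\chi_H$, whence $\chi_G(g+S) = \chi_H(h_0 + S)$ is the color set of a translate of $S$ inside $H$ and therefore contains all $p_H(S)$ colors. Thus $\chi_G$ is $S$-polychromatic with $p_H(S)$ colors, giving $p_G(S) \ge p_H(S)$ and, combined with the first inequality, the desired equality. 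I do not expect a genuine obstacle here; the substantive observation is simply that translates cannot leave their coset, which fails without $S \subseteq H$ and is what makes the polychromatic number intrinsic to $S$ rather than dependent on the ambient group.
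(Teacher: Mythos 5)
Your proof is correct and takes essentially the same approach as the paper: the easy inequality is the restriction of a coloring of $G$ to $H$ (which the paper asserts directly and you derive from Lemma~\ref{hom} via the inclusion map), and the reverse inequality uses the identical construction of transplanting an $S$-polychromatic coloring of $H$ onto each coset of $H$ via coset representatives, with the same key observation that $S \subseteq H$ forces every translate to stay inside a single coset. There are no gaps.
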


\begin{proof}
Since $H$ is a subset of $G$, $p_H(S) \ge p_G(S)$.  To prove the other inequality, suppose $\chi'$ is an $S$-polychromatic coloring of $H$ with $p_H(S)$ colors.  Let $V \subseteq G$ be a set containing exactly one element from each coset of $H$.  For every $g \in G$, there is a unique $h \in H$ and $v \in V$ such that $g = h+v$.  Define a coloring $\chi$ of $G$ such that $\chi(g)= \chi'(h)$, i.e. the summand $v$ is ignored.  We show that $\chi$ is $S$-polychromatic.  Given $g\in G$ with $g=h+v$ for some $h \in H$, $v \in V$, consider the translate $g+S$, and note
\[\chi(g+S) = \chi(h+v+S) = \chi'(h+S).\]
Since $h+S \subseteq H$ is a translate of $S$, $\chi(g+S)$ contains all $p_H(\phi(S))$ colors, and $\chi$ is $S$-polychromatic.
\end{proof}

% removed $\gcd(a,b,c)=1$

\begin{lemma}\label{reduc}
Suppose $a,b,c, k \in \Z$ with $0<a<b<c$,  $k \ge 1$. Let $S = \{0,ka,kb,kc\}$, $S_1 = \{0, a, b, c\}$, and $S_2 = \{0, b-a, b, 2b-a\}$.  Then 
\begin{enumerate}
\item[(i)] $p_{\Z}(S) = p_{\Z}(S_1)$.
\item[(ii)] If $q \in \N$, then $p_{\Z}(S_1) \ge p_{\Z_q}(S_1)$.
\item[(iii)] If $m=c-a+b$, then $p_{\Z_m}(S_1) = p_{\Z_m}(S_2)$.
\item[(iv)] If $q \in \N$, with $\gcd(k,q) = 1$, then  $p_{\Z_q}(S) = p_{\Z_q}(S_1)$.
\end{enumerate}
\end{lemma}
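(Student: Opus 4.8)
The plan is to derive all four parts from the structural results already established (Lemma~\ref{hom}, Corollary~\ref{iso}, and Lemma~\ref{ambient}), together with the translation-invariance of the polychromatic number observed in the introduction. Three of the four parts are immediate consequences of the right choice of map; only part (iii) requires a short computation.

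For (i), I would note that $S = kS_1$ and that the map $\phi\colon \Z\to\Z$ given by $\phi(x)=kx$ is an isomorphism from $\Z$ onto the subgroup $k\Z$, with $\phi(S_1)=S$. Corollary~\ref{iso} then gives $p_{\Z}(S_1)=p_{k\Z}(S)$, and since $S\subseteq k\Z$, Lemma~\ref{ambient} gives $p_{k\Z}(S)=p_{\Z}(S)$; chaining these yields $p_{\Z}(S_1)=p_{\Z}(S)$. For (ii), I would apply Lemma~\ref{hom} to the reduction homomorphism $\pi\colon \Z\to\Z_q$: since $\pi(S_1)$ is precisely $S_1$ viewed in $\Z_q$, the lemma gives $p_{\Z}(S_1)\ge p_{\Z_q}(\pi(S_1))=p_{\Z_q}(S_1)$. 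For (iv), since $\gcd(k,q)=1$, multiplication by $k$ is an automorphism $\psi$ of $\Z_q$ with $\psi(S_1)=S$ in $\Z_q$, so Corollary~\ref{iso} gives $p_{\Z_q}(S_1)=p_{\Z_q}(S)$ directly.

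The one part needing a calculation, and hence the main (mild) obstacle, is (iii). Here I would show that, modulo $m=c-a+b$, the set $S_2$ is the image of $S_1$ under the affine map $x\mapsto b-x$. The key identity is that $c\equiv a-b \pmod m$, whence $b-c\equiv 2b-a \pmod m$, so that $\{\,b-x : x\in S_1\,\}=\{b,\ b-a,\ 0,\ 2b-a\}=S_2$ in $\Z_m$. The map $x\mapsto b-x$ factors as the negation automorphism of $\Z_m$ (to which Corollary~\ref{iso} applies) followed by translation by $b$, which preserves the polychromatic number because a set and its translates have the same family of translates. Combining these gives $p_{\Z_m}(S_1)=p_{\Z_m}(S_2)$.

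The only subtlety to keep in mind throughout is that reducing modulo $m$ or $q$ could in principle collapse distinct elements, but this causes no difficulty: the maps used in (iii) and (iv) are bijections of the finite group, and the polychromatic number is defined purely in terms of the collection of translates of the image set, so the equalities hold regardless of whether the reduced sets retain four elements.
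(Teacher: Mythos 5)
Your proposal is correct. Parts (i), (ii), and (iv) coincide exactly with the paper's proof: the same isomorphism $n \mapsto kn$ onto $k\Z$ combined with Corollary~\ref{iso} and Lemma~\ref{ambient}, the same reduction homomorphism $\Z \to \Z_q$ fed into Lemma~\ref{hom}, and the same multiplication-by-$k$ automorphism of $\Z_q$. The only divergence is in (iii). The paper simply observes that in $\Z_m$ one has $S_2 = S_1 + (b-a)$: adding $b-a$ to the elements of $S_1$ gives $\{b-a,\ b,\ 2b-a,\ c+b-a\}$, and $c+b-a = m \equiv 0 \pmod{m}$, so $S_1$ and $S_2$ are translates of one another and translation invariance finishes the argument in one line. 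You instead realize $S_2$ as the image of $S_1$ under the reflection $x \mapsto b-x$, which obliges you to invoke the negation automorphism of $\Z_m$ (via Corollary~\ref{iso}) on top of translation invariance. Both computations are valid; they coexist because $S_2$ happens to be its own reflection, $S_2 = (2b-a) - S_2$ in $\Z_m$, so $S_2$ is reachable from $S_1$ either by a translation or by an affine reflection. The paper's pure-translation route is the more economical of the two and needs no appeal to Corollary~\ref{iso} at all. Your closing remark that reduction modulo $m$ or $q$ might collapse elements, and that this is harmless because the polychromatic number depends only on the family of translates of the image set, is a correct point that the paper leaves implicit.
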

\begin{proof}
\begin{enumerate}
\item[(i)] Define $\phi: \Z \to k\Z$ such that $\phi(n) = kn$.  Then $\phi$ is an isomorphism where $\phi(S_1) = S$, so Corollary~\ref{iso} implies  $p_{k\Z}(S) = p_\Z(S_1)$. Since $k\Z$ is a subgroup of $\Z$ and $S \subseteq k\Z$, Lemma~\ref{ambient} implies $p_{k\Z}(S) = p_\Z(S)$.  By combining these equations, we conclude $p_\Z(S) = p_{k\Z}(S) = p_\Z(S_1)$.
%Suppose $\chi$ is an $S$-polychromatic coloring. If $\chi_1$ is the coloring defined by  $\chi_1(n) = \chi(kn)$, then $\chi_1$ is $S_1$-polychromatic.  Conversely, if $\chi_1$ is an $S_1$-polychromatic coloring, then the coloring $\chi$ defined by $\chi(n) = \chi_1(\lfloor n/k\rfloor)$ is $S$-polychromatic.
\item[(ii)] This part follows from Lemma~\ref{hom} using the homomorphism $\phi:\Z \to \Z_q$ where $\phi(n) = n \pmod{q}$.
%Suppose $\chi_m$ is an $S_1$-polychromatic coloring on $\Z_m$.  Then the coloring $\chi_1$ defined by $\chi_1(n) = \chi_m(r)$ where $0 \le r <m$ is the remainder when $n$ is divided by $m$ is an $S_1$-polychromatic coloring on $\Z$.
\item[(iii)] In $\Z_m$, with addition $\pmod{m}$,  $S_2 = S_1 + (b-a)$. Thus in $\Z_m$, $S_1$ and $S_2$ are translates of each other and have the same polychromatic number.
\item[(iv)]  Define $\phi:\Z_q \to \Z_q$ so that $\phi(n) = kn$. Since $\gcd(k,q)=1$, $\phi$ is an isomorphism.  Since $\phi(S_1) = S$, Corollary 10 implies $p_{\Z_q}(S) = p_{\Z_q}(S_1)$.
%Since $\gcd(k,q)=1$, we can write $\Z_q=\{ik \pmod{m} : 0\leq i \leq m-1\}$. Let $\chi$ and $\chi_1$ be colorings of $\Z_q$ such that $\chi(i)=\chi_1(ik \pmod{m})$.  Then $\chi$ is $S$-polychromatic if and only if $\chi_1$ is $S_1$-polychromatic.
\end{enumerate}
\end{proof}

\begin{proofof}{Theorem~\ref{ps34}}
Let $S \subseteq \Z$ have cardinality four.  Since all translates of $S$ have the same polychromatic number, we may assume that 0 is the smallest element of $S$, and by Lemma~\ref{reduc}, Part (i), it suffices to prove the theorem in the case that $S=\{0,a,b,c\}$ with $0<a<b<c$ and $\gcd(a,b,c)=1$.

It is possible, though tedious, to prove the entire theorem by hand.  Thus in the interest of simplifying the exposition, we verified using a computer search that for every $S$ with diameter at most 288 there exists an $S$-polychromatic 3-coloring of $\Z_q$ for some $q$ depending on $S$.  The code for this search has been included as an ancillary file with the preprint of this paper at arxiv.org/abs/1704.00042.  By Lemma~\ref{reduc}, Part (ii), this gives a periodic $S$-polychromatic 3-coloring of $\Z$.  Hence we suppose that $c \geq 289$. 

For the remainder of the proof, let $m=c-a+b$. By Lemma~\ref{reduc}, Parts (ii) and (iii), it suffices to show that we can 3-color $\Z_m= \{0,1,\ldots,m-1\}$ so that the translates of $\{0,b-a,b,2b-a\}$ are polychromatic.  So for the remainder of the proof we assume $S=\{0,b-a,b,2b-a\}$ and seek an $S$-polychromatic 3-coloring of $\Z_m$. The key observation regarding $S$ is that it contains two repeated differences: $b-a$ and $b$.

Define $d_1=\gcd(b,m)$ and $d_2=\gcd(b-a,m)$. Since $1=\gcd(a,b,c)=\gcd(b-a,b,c-a+b) = \gcd(b-a, b, m)$, we know $\gcd(d_1,d_2)=1$. We distinguish two main cases. In the first case, which we call ``single cycle,'' we assume $\min\{d_1,d_2\}=1$ and give a coloring of $\Z_m$. In the second case, which we call ``multiple cycle,'' we assume $\min\{d_1,d_2\}>1$ and partition $\Z_m$ into multiple cycles of length $m/d_i$ for one of the choices of $i$.  We then give a rule for coloring each cycle.

\textbf{Main case 1 (Single cycle):}  Suppose $\min\{d_1,d_2\}=1$. Without loss of generality, assume $d_1=1$ (if not, then simply switch all occurences of $b$ and $b-a$ in the argument below).  Let $2\leq g \le m-2$ satisfy $gb \equiv b-a \pmod{m}$, so that $S = \{0,bg,b,b(g+1)\}$.
Applying Lemma~\ref{reduc}, Part (iv), with $q=m$ and $k=b$, we can instead work with $S = \{0,g,1,g+1\} = \{0,1,g,g+1\}$.
 
We may assume that $g\leq m/2$, as otherwise we could work with the translate $(m-g)+S =\{0,1, m-g, m-g+1\}$. Let $s$ be the smallest multiple of 3 such that $g>\lceil m/s\rceil$. We consider four subcases:  The first two are (1a) $g=2$, $3$, or $4$ and (1b) $5 \le g<2\lfloor m/s \rfloor$. In the remaining subcases (1c) and (1d), $2\lfloor m/s\rfloor\leq g \leq \lceil m/(s-3)\rceil$. For $m >8$, if $2\lfloor m/s\rfloor\leq g\leq m/2$ then $s >3$, and for $m > 44$, if $2\lfloor m/s\rfloor\leq g \leq \lceil m/(s-3)\rceil$ then $s<9$.  Since $m > c \ge 289 > 44$, we can assume $s=6$, so $2\lfloor m/6\rfloor\leq g \leq \lceil m/3\rceil$. This implies $m=3g+k$ where $-2 \le k \le 5$ and there are two further subcases to consider, depending on the residue class of $m$ modulo 6: (1c)  $m=3g-2$, $3g-1$, $3g+1$, $3g+2$, $3g+4$, or $3g+5$, and (1d) $m=3g$ or $3g+3$.

\textbf{Subcase (1a):} Suppose $g=2$, $3$, or $4$. Then $S = \{0,1,2,3\}$,  $\{0,1,3,4\}$, or $\{0,1,4,5\}$, respectively.  In Subcase (1c) we will construct $S$-polychromatic 3-colorings of $\Z_m$ for each of these sets.

\textbf{Subcase (1b):}  Suppose $5 \le g<2\lfloor m/s \rfloor$. Then split $\Z_m$ into $s$ intervals as equally as possible (i.e. of lengths $\lfloor m/s\rfloor$ and $\lceil m/s \rceil$) and color these intervals $010101\ldots$, followed by $121212\ldots$, then $202020\ldots$, repeating $s/3$ times. Since $\lceil m/s\rceil<g<2\lfloor m/s \rfloor$, any translate of $S'$ where the pairs $\{0,1\}$ and $\{g,g+1\}$ lie in different intervals gets all three colors. If one of the pairs $\{0,1\}$ or $\{g,g+1\}$ straddles two consecutive intervals, this pair may get only the single color common to these two intervals, but then the other pair lies fully inside a third interval which is colored with the remaining two colors.

\textbf{Subcase (1c):} Suppose  $m=3g-2$, $3g-1$, $3g+1$, $3g+2$, $3g+4$, or $3g+5$. In this case we know that $m\not\equiv 0 \pmod{3}$ so we can apply Lemma~\ref{reduc}, Part (iv), with $q=m$ and $k=3$, and instead work with one of the sets in $\mathcal{S} =  \{\{0,2,3,5\}, \{0,1,3,4\}, \{0,1,2,3\}, \{0,3,4,7\}, \{0,3,5,8\}\}$. For example, if $m=3g-2$, then multiplying by 3, $S$ is transformed into $\{0,3,3g,3g+3\} \equiv \{0,2,3,5\}$, while if $m=3g+4$, then multiplying by 3, $S$ is transformed into $\{0,3,3g,3g+3\} \equiv \{0,3,-4,-1\}$, which is a translate of $\{0,3,4,7\}$.

Thus we have reduced the problem to finding an $S$-polychromatic 3-coloring of $\Z_m$ for each of the sets $S\in \mathcal{S}$. For each $S\in \mathcal{S}$, in Table~\ref{periodrr1} we list one interval of length $r$ and one of length $r+1$ obtained by adding an initial 0 to the other interval. We  also include an interval for $\{0,1,4,5\}$ to cover Subcase (1a).  Each of the intervals has the property that concatenating the intervals of length $r$ and $r+1$ in any way results in an $S$-polychromatic coloring for the corresponding set.  One can check this by hand, using the fact that in each case, a translate of $S\in \mathcal{S}$ intesects at most two consecutive intervals. Hence if $m$ can be expressed as a positive integer combination of $r$ and $r+1$, $m=hr + k(r+1)$, we can obtain an $S$-polychromatic coloring with period $m$. For $r=3,6,7,9$, by the 2-coin Frobenius problem, $m$ can be expressed as a positive integer combination of $r$ and $r+1$ for any $m$ greater than $r^2-r-1\leq 71<289$.

%
%"For each S in (script S) we list two "fragments" (or "blocks"?) of a coloring of Z_m, one of them of length r and the other of length r+1 obtained from the first by adding an extra initial 0.  It is not hard to check that piecing together these fragments of length r and r+1 in any way results in an S-polychromatic coloring.  Hence if we can express m as hr + k(r+1)...(and then the Frobenius stuff).  For example, for S = {0,1,4,5}, the template (I forget if we used that word earlier) must hit either 2,3, or 4 0's, and at least one 1 and one 2, not matter how the fragments are pieced together.

%Thus we have reduced the problem to finding an $S$-polychromatic 3-coloring of $\Z_m$ for each of the sets $S\in \mathcal{S}$. For each $S\in \mathcal{S}$ we write one interval of a periodic $S$-polychromatic 3-coloring on $\Z$ in Table~\ref{periodrr1}, and also include one for $\{0,1,4,5\}$ to cover Subcase (1a). Each of these periodic colorings also has the following property, which can be checked by hand: If the coloring has period $r$, then the periodic 3-coloring with period $r+1$ obtained by adding a prefix of 0 to each interval is also $S$-polychromatic.  In each case this means that for any $h,k\geq 0$ we can create a period $hr+k(r+1)$ $S$-polychromatic 3-coloring by concatenating a suitable number of the two blocks. This follows from the observation that in each case, a translate of $S\in \mathcal{S}$ intesects at most two consecutive blocks.

\begin{table}
\begin{center}
\begin{tabular}{c|c|l|l}
$S$ &$r$ &period $r$ & period $r+1$ \\ \hline
$\{0,2,3,5\}  $&6& 001122& 0001122\\
$\{0,1,3,4\} $&6& 001212& 0001212\\ 
$\{0,1,2,3\}$ &3& 012& 0012\\
$\{0,3,4,7\}$&9& 000111222& 0000111222\\
$\{0,3,5,8\}$&9& 000111222& 0000111222\\ 
$\{0,1,4,5\}  $&7& 0001212& 00001212\\
\end{tabular}
\end{center}
\caption{One interval of a periodic coloring for sets in Subcases (1a) and (1c).}
\label{periodrr1}
\end{table}

\textbf{Subcase (1d):} Suppose $m=3g$ or $3g+3$. If $g\not\equiv 0 \pmod{3}$ then simply color $\Z_m$ with the pattern $0120120\ldots012$. If $g\equiv 0\pmod{3}$ and $m=3g$, color $\Z_m$ in 3 equal intervals, each of length $g$: $012012\ldots012$ followed by $120120\ldots120$ followed by $201201\ldots201$. Finally, if $g\equiv0 \pmod{3}$ and $m=3g+3$ we color $\Z_m$ in 3 equal intervals, each of length $g+1$: $012012\ldots0120$ followed by $201201\ldots2012$ followed by $120120\ldots1201$.

\textbf{Main case 2 (Multiple cycles):} Suppose $\min\{d_1,d_2\}>1$. Since $d_1$ and $d_2$ are relatively prime, at most one of them can be a multiple of 3.  Choose the smallest of these numbers that is not a multiple of 3, and as in the single cycle case, without loss of generality assume it is $d_1$.

Let $e_1=m/d_1$ and $e_2=m/d_2$. For $0 \le i <d_1$, let 
\[C_i = \{(b-a)i +bj\pmod{m}: 0 \le j < e_1\}.\] 
Since 
\[\Z_m=\{(b-a)i +bj \pmod{m} : 0\leq i < d_1, 0\leq j< e_1\},\] 
the $C_i$'s form a partition of $\Z_m$ into $d_1$ cycles, each with $e_1$ elements.

\junk{%%%%%%%%%%%%%%%%%%
\begin{align*}
C_0&=\{0,b,2b,\ldots,(e_1-1)b\},\\
C_1&=\{(b-a),(b-a)+b,\ldots,(b-a)+(e_1-1)b\},\\
&\vdots\\
C_{d_1-1}&=\{(d_1-1)(b-a),\ldots,(d_1-1)(b-a)+(e_1-1)b\}.\\
\end{align*}
}%%%%%%%%%%%%%%%%%%%%%%%5

Let $c_{i,j}$ denote the $j$th element of $C_i$, i.e. $c_{i,j}=i(b-a)+jb \pmod{m}$. Note that any translate of $S$ contains two consecutive elements of two consecutive cycles, i.e. any translate of $S$ has the form $\{c_{i,j}, c_{i,j+1}, c_{i+1,j}, c_{i+1,j+1}\}$, where the first entry in the subscript is taken$\mod{d_1}$ and the second entry is taken$\mod{e_1}$. We describe an $S$-polychromatic 3-coloring for each of four subcases: (2a) $e_1$ is even, (2b) $d_1$ is even and $e_1$ is odd, (2c) $d_1$ and $e_1$ are both odd, with $e_1\leq 17$, and (2d) $d_1$ and $e_1$ are both odd, with $e_1\geq 19$.

\textbf{Subcase (2a):} Suppose $e_1$ is even. For $i=0,\ldots,\lfloor d_1/2\rfloor -1$, color each $C_{2i}$ by  $01010\ldots 01$ and each $C_{2i+1}$ by $02020\ldots 02$. Finally, if $d_1$ is odd, color $C_{d_1-1}$ by $1212\ldots 12$.

\textbf{Subcase (2b):} Suppose $d_1$ is even and $e_1$ is odd. For $i=0,\ldots,d_1/2 -1$, color each $C_{2i}$ by  $01010\ldots 011$ and each $C_{2i+1}$ by $22020\ldots 02$.

\textbf{Subcase (2c):} Suppose $d_1$ and $e_1$ are both odd, with $e_1\leq 17$. Since $e_1e_2\geq m>c \ge 289$, one of $e_1$ and $e_2$ is larger than 17, so $e_2> e_1$ and hence $d_1 > d_2$.  Since $d_1$ is the smaller of $d_1$ and $d_2$ that is not a multiple of 3, $d_2$ must be a multiple of 3, and thus so is $e_1$.

We color each $C_i$ with one of three patterns: $012012\ldots012$, $120120\ldots 120$, or $201201\ldots201$. Such a coloring is $S$-polychromatic so long as for all $i$, $C_i$ and $C_{i+1}$ are colored with different patterns. For $0 \le i \le (d_1-3)/2$, color $C_{2i}$ with the first pattern and color $C_{2i+1}$ with the second pattern. Finally, color $C_{d_1-1}$ with the third pattern.
 
\textbf{Subcase (2d):} Suppose $d_1$ and $e_1$ are both odd, with $e_1\geq 19$. Since $d_1$ is not divisible by 3 and $\min\{d_1,d_2\}>1$, $d_1\geq 5$.  Let  $e_1=u+v+w$ be a sum of odd integers $u$, $v$, $w$ with $u\geq v\geq w\geq u-2$.  Color $C_0$ in intervals of size $u,v,w$, using the patterns $0101\ldots010$ then $1212\ldots 121$ and then $2020\ldots 202$. For each  $i\geq 1$, color $C_i$ by taking a ``counterclockwise rotation'' of length $r_i$ of the coloring of $C_{i-1}$, so that the color of $c_{i,j+r}$ is the same as the color of $c_{i-1,j}$. For $1 \le i \le d_1-1$, if $u\leq r_i \leq v+w=e_1-u$, then each translate of $S$ meeting $C_{i-1}$ and $C_{i}$ receives all 3 colors. %(This is similar to the case $s=3$ of the single cycle case although parity of the parts also plays a role here.)

It remains to show that there are choices of $r_1, \ldots, r_{d_1-1}$ with $u\leq r_i \leq v+w=e_1-u$ so that of the translates of $S$ meeting $C_{d_1-1}$ and $C_0$ receive all three colors. The coloring of $C_0$ is a ``clockwise rotation'' of length $R= -r_1-r_2 -\cdots -r_{d_1-1}$ of the coloring of $C_{d_1-1}$, i.e. the  color of $c_{0,j-R}$ is the same as the color of $c_{d_1-1,j}$. Since for each $i$, $u\leq r_i \leq v+w=e_1-u$, it suffices to show that there is a multiple of $e_1$ in the interval $[d_1u,d_1(e_1-u)]$, ensuring there are choices for the $r_i$'s such that $R$ is congruent to a number between $u$ and $e_1-u$ $\pmod{e_1}$. This certainly holds if $d_1(e_1-2u)\geq e_1-1$ which, since $d_1\geq 5$, holds if $4e_1\geq 10u-1$. This inequality is true for $e_1\geq 19$.

This completes the multiple cycles case and the proof. 
\end{proofof}

\section{Colorings, Blocking Sets, Coverings, and Tilings}\label{polcov}

In this section we prove the results necessary to resolve Newman's conjecture. The key insight in proving Lemma~\ref{csps} is that the elements of a given color in an $S$-polychromatic coloring form a blocking set for $S$. While it is possible for $\alpha(S)$ to be equal to $1/p(S)$ (e.g. if $|S|=2$ then $\alpha(S)= 1/2 = 1/p(S)$), in general these two quantities are not equal.  For example, $p(\{0,1,3\}) = 2$, but by Lemma~\ref{alphc} and Theorem~\ref{newt}, $\alpha(\{0,1,3\}) = 2/5 < 1/2$.

\begin{proofof}{Lemma~\ref{csps}}
Let $\chi$ be an $S$-polychromatic coloring of $\Z$ with $p(S)$ colors.  Suppose $d \in \Z$ is greater than the diameter of $S$ and let $I_j = \{n\in \Z : jd \le n < (j+1)d\}$.  By the pigeonhole principle, for some $0 \le j_1 < j_2 \le (p(S))^d$ the coloring of the intervals $I_{j_1}$ and $I_{j_2}$ are identical, i.e. for $0\le k <d$, $\chi(j_1d+k) = \chi(j_2d+k)$.  Let $m = (j_2-j_1)d$. For any $n\in \Z$, denote by $r$ the remainder when $n$ is divided by $m$, so $0\le r < m$ . Let $\chi'$ be the coloring of $\Z$ where $\chi'(n) = \chi(j_1d+r)$.  Note that $\chi'$  uses $p(S)$ colors and is periodic with period $m$, i.e. for all $n \in \Z$, $\chi(n) = \chi(n +m)$. Furthermore, the coloring under $\chi'$ of any $d$ consecutive integers is identical to the coloring under $\chi$ of some $d$ consecutive integers, so $\chi'$ is $S$-polychromatic. Let $T_i = \{n\in \Z: \chi'(n) = i\}$. Since any periodic set has a defined density, $d(T_i)$ is defined for each $i$, and $\sum_{i=1}^{p(S)} d(T_i) = 1$.  Since $\chi'$ is $S$-polychromatic, for each $i$, each translate of $S$ contains an element of $T_i$, i.e. $T_i$ is also a blocking set for $S$. Thus for some $i$, $T_i$ is a blocking set for $S$ with density at most $1/p(S)$, which implies that $\alpha(S) \le 1/p(S)$. 
\end{proofof}

For any subset $T$ of an abelian group $G$, let $-T$ denote the set $\{-t: t\in T\}$. Lemma~\ref{CompTur} is well-known (see e.g. \cite{St86}) but for completeness we present a proof.

\begin{lemma}\label{CompTur}
Let $G$ be an abelian group, and $S \subseteq G$.  Then $T\subseteq G$ is a complement set for $S$ if and only if $-T$ is a blocking set for $S$.
\end{lemma}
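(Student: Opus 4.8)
The plan is to prove both implications at once by unwinding the two definitions into quantified statements over $G$ and observing that the map $g \mapsto -g$ converts one into the other. Recall that $T$ is a complement set for $S$ exactly when $S + T = G$, i.e. every $g \in G$ can be written as $s + t$ with $s \in S$, $t \in T$; and $-T$ is a blocking set for $S$ exactly when every translate $n + S$ meets $-T$, i.e. for every $n \in G$ there are $s \in S$, $t \in T$ with $n + s = -t$.

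First I would rewrite the blocking-set condition. The equation $n + s = -t$ is equivalent to $-n = s + t$, so $-T$ is a blocking set for $S$ if and only if for every $n \in G$ there exist $s \in S$, $t \in T$ with $-n = s + t$. Next I would apply the substitution $g = -n$; since negation is a bijection of $G$, letting $n$ range over $G$ is the same as letting $g$ range over $G$. Hence the condition becomes: for every $g \in G$ there exist $s \in S$, $t \in T$ with $g = s + t$, which is precisely $S + T = G$, i.e. $T$ is a complement set for $S$.

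Since every step above is a logical equivalence, this chain establishes both directions of the ``if and only if'' simultaneously. The only thing to watch is the bookkeeping of signs: the single substantive idea is the sign flip $n + s = -t \Leftrightarrow -n = s + t$ together with the fact that $n \mapsto -n$ is a bijection of $G$, and everything else is just expanding the definitions of complement set and blocking set. I do not expect any real obstacle here beyond stating the equivalences cleanly and making sure the universal quantifier is correctly transported across the substitution.
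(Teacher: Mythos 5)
Your proof is correct and rests on exactly the same idea as the paper's: the sign flip $n+s=-t \Leftrightarrow -n = s+t$ together with the fact that $n \mapsto -n$ is a bijection of $G$. The only difference is presentational — the paper proves the two implications separately (the converse by contradiction), whereas you run a single chain of equivalences, which is if anything slightly cleaner.
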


\begin{proof}
Suppose $T$ is a complement set for $S$. For any $n \in G$, $-n \in S+T$, so there must be some $t\in T, s\in S$ such that $t+s = -n$. This implies $t=-n-s$, so $-n-s \in T$, and $n+s \in -T$. Thus  for every $n$, some element of $n+S$ is in $-T$, and $-T$ is a blocking set for $S$.

Conversely, suppose $-T$ is a blocking set for $S$. For the sake of contradiction, assume $T$ is not a complement set for $S$, i.e. there is some $-n\in G$ such that $-n \notin S+T$. This implies that for all $s \in S$, $-n-s \notin T$, which means for all $s\in S$, $n+s \notin -T$. Thus $n+S \subseteq G\setminus -T$, and so $-T$ is not a blocking set for $S$, a contradiction.
\end{proof}

\begin{proofof}{Lemma~\ref{alphc}}
Lemma~\ref{CompTur} implies that $T$ is a complement set for $S$ if and only if $-T $ is a blocking set for $S$. If they exist, the densities of $T$ and $-T$ are the same.
\end{proofof}

\subsection{Polychromatic Colorings and Tilings}\label{tiliff}

We now describe some relationships between polychromatic colorings and tilings.

\begin{theorem}\label{polytile}
Let $G$ be any abelian group. A finite set $S \subseteq G$ tiles $G$ by translation if and only if $p(S)=|S|$. Moreover, if $\chi$ is an $S$-polychromatic coloring of $G$ with $|S|$ colors and $T$ is the set of  elements of $G$ colored by $\chi$ with any given color, then $S \oplus T = G$.
\end{theorem}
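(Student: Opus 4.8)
The plan is to prove both directions of the biconditional, and to extract the tiling complement set directly from a polychromatic coloring in the forward direction so that the ``moreover'' clause comes for free. Let $n = |S|$. The key structural fact to exploit is that an $S$-polychromatic coloring with exactly $n$ colors is extremely rigid: since every translate $g+S$ has exactly $n$ elements and must contain all $n$ colors, each translate must be \emph{rainbow}, meaning the $n$ elements of $g+S$ receive $n$ distinct colors with each color appearing exactly once.

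For the forward direction, I would assume $S$ tiles $G$, say $G = S \oplus T_0$ for some tiling complement set $T_0$. The natural coloring to try is to color each element of $G$ by which ``slot'' of $S$ it occupies: every $g \in G$ has a unique representation $g = s + t$ with $s \in S$, $t \in T_0$, so I define $\chi(g) = s$, using the $n$ elements of $S$ as the color set. To check this is $S$-polychromatic, I take an arbitrary translate $t + S$ with $t \in G$ and show its $n$ elements receive all $n$ colors; the uniqueness of the $\oplus$-decomposition should force the $n$ translate-elements into $n$ distinct slots. This shows $p(S) \ge n$, and since $p(S) \le |S| = n$ always holds (any translate has only $n$ elements, so cannot contain more than $n$ colors), we get $p(S) = n$.

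For the reverse direction, I would start from an $S$-polychromatic coloring $\chi$ with $n$ colors and let $T$ be the color class of one fixed color $i$. First I establish the rainbow property above: each translate contains each color exactly once, hence exactly one element of $T$. The claim is then $G = S \oplus T$. Surjectivity ($S + T = G$): for any $g \in G$, the translate $g + S$ (equivalently, I look at translates of the form $-g + S$ or $g - S$ depending on the sign convention) contains an element of color $i$, i.e.\ some $s \in S$ with $g + s \in T$ or $g - s \in T$, which I would unwind to write $g$ as an element of $S + T$. Disjointness/uniqueness: if $s_1 + t_1 = s_2 + t_2$ with $s_j \in S$, $t_j \in T$ and $s_1 \neq s_2$, then the translate built on this common point would contain two elements of $T$, contradicting that each translate meets $T$ exactly once. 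Thus the decomposition is unique and $G = S \oplus T$, proving both $S$ tiles $G$ and the ``moreover'' statement simultaneously.

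The main obstacle I anticipate is bookkeeping with the sign and translation conventions, since the argument relating the color class $T$ to a complement set runs through the same complement/blocking-set correspondence as Lemma~\ref{CompTur}, and one must be careful about whether $T$ or $-T$ is the complement set and whether translates are $g + S$ or $-g + S$. The clean way to handle this is to argue the uniqueness and covering properties purely in terms of translates meeting the color class exactly once (the rainbow property), rather than passing through densities or the blocking-set language; this keeps the two decomposition conditions (disjoint translates, full cover) in direct correspondence with ``exactly one element of each translate has color $i$.'' The genuinely substantive content is the rainbow observation, and once that is in hand both the tiling conditions follow from counting elements of $T$ inside translates.
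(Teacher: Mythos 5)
Your forward direction, the rainbow observation, and the disjointness half of the reverse direction are all sound, and they coincide with the paper's own proof. The genuine gap is in the covering step $S+T=G$. The rainbow property says that every translate $g+S$ contains exactly one element of $T$, i.e.\ for each $g \in G$ there is exactly one $s \in S$ with $g+s \in T$. Unwinding this gives $g = t - s$, so what you actually obtain is $G = T - S$ with unique representation, equivalently $S \oplus (-T) = G$: the complement this argument produces is $-T$, not $T$. Your alternative reading, ``some $s \in S$ with $g - s \in T$,'' is the one that would give $g \in S+T$, but it has no justification: $g - S$ is a translate of $-S$, not of $S$, and the polychromatic hypothesis says nothing about translates of $-S$. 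Consequently your closing claim that ``both tiling conditions follow from counting elements of $T$ inside translates'' fails for the covering condition --- counting elements of $T$ inside translates of $S$ can only ever control $T - S$. This is not a bookkeeping issue to be absorbed into conventions: whether a tiling complement can be replaced by its negative is a genuinely nontrivial fact about abelian groups, so the ``moreover'' clause (which names the color class $T$ itself as the complement) needs its own argument. What your route does salvage is the if-and-only-if statement, since $S \oplus (-T) = G$ already shows that $S$ tiles $G$.

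The missing idea, which is how the paper closes exactly this hole, is a pigeonhole step that transfers information about the non-translate $g - S$ back into a genuine translate of $S$. Suppose $g \notin S+T$. Then none of the $|S|$ distinct elements $g - s_1, \ldots, g - s_k$ lies in $T$, so they avoid the color of $T$ and hence use at most $|S|-1$ colors; by pigeonhole, $\chi(g-s_i) = \chi(g-s_j)$ for some $i \neq j$. Now set $m = g - s_i - s_j$. The translate $m + S$ contains both $m + s_i = g - s_j$ and $m + s_j = g - s_i$, which are colored identically, contradicting the rainbow property. Note that this is the same maneuver needed to make your disjointness sentence precise: there, the ``translate built on the common point'' $s_1 + t_1 = s_2 + t_2$ is $(s_1 + t_1) - s_1 - s_2 + S$, which contains both $t_1$ and $t_2$. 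With this pigeonhole step added to your covering argument, the proposal becomes a complete proof and matches the paper's.
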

\begin{proof}
Let $S=\{s_1, s_2, \ldots, s_k\}$, and suppose $S$ tiles $G$ with complement set $T \subseteq G$.  For each $n\in G$, define a coloring $\chi$ on $G$ so that $\chi(n) = i$ if $n =s_i+t$ for some $t\in T$. By the definition of tiling, this coloring is well-defined. For the sake of contradiction, assume $\chi$ is not $S$-polychromatic. Then for some $l$ where $1 \le l \le k$, there exists $n\in G$ and $s_i, s_j \in S$ with $i \neq j$ such that  $\chi(n+s_i)= \chi(n+s_j) = l$. Then there exist $t_1, t_2 \in T$, $t_1 \neq t_2$, such that $n+s_i = t_1 + s_l$ and $n+s_j = t_2 + s_l$. Subtracting these equations, we find that $s_i -s_j = t_1-t_2$.  Thus $t_2 + s_i=t_1 + s_j$, which is a contradiction.

Conversely, let $S=\{s_1, s_2, \ldots, s_k\}$, suppose $p(S)=|S|$, and let $\chi$ be an $S$-polychromatic coloring of $G$ with $|S|$ colors. Then for all $n \in G$, if $i \neq j$ then $\chi(n+s_i) \neq \chi(n+s_j)$. Let $T\subseteq G$ be the set of elements colored with a given color.  We show that $S \oplus T = G$. First assume for the sake of contradiction that two translates of $S$ share an element, i.e. there exist $s_i, s_j \in S$, $i \neq j$, $t_1, t_2 \in T$, $t_1 \neq t_2$, such that $s_i + t_1 = s_j + t_2$.  Let $n=t_1-s_j=t_2-s_i$, so $t_1 = n+s_j$ and $t_2 = n+s_i$.  Since $\chi(t_1)= \chi(t_2)$ we get $\chi(n+s_j)= \chi(n+s_i)$, so two elements of $n+S$ are colored identically, which is a contradiction. 

It remains to show that $S+T=G$. Suppose there is some $n \in G$ such that $n \notin S+T$. Then for all $i$, $n-s_i \notin T$, which implies that the $|S|$ elements of $n-S$ are colored with at most $|S|-1$ colors, i.e. two are colored identically.  Suppose $\chi(n-s_i) =\chi(n-s_j)$, where $i \neq j$.  Let $m=n-s_j-s_i$.  Then $m+S$ contains both $m+s_i =n-s_j$ and $m+s_j= n-s_i$. Since these integers are colored identically, $m+S$ is a translate of $S$ that does not contain all colors, which is a contradiction.
\end{proof}

Sets of integers with cardinality $n=3$ or 4 always have polychromatic number $n$ or $n-1$, and a corollary of Theorem~\ref{polytile} is that they have polychromatic number $n-1$ if and only if they do not tile $\Z$. According to Remark 5.6 in \cite{BJR11}, $c(\{0,1,3,4,8\}) = 3/11 > 1/4$.  Thus by Lemma~\ref{csps}, $\{0,1,3,4,8\}$ is an example of a set with cardinality 5 and polychromatic number 3. The results of \cite{AKN95} and \cite{HS16} imply that for sets $S$ with large cardinality $n$ the cardinality and polychromatic number  of $S$ can differ by a factor of $1/\ln n$. 

We now state some other corollaries of Theorem~\ref{polytile}.

\begin{corollary}
If a finite set $S$ tiles an abelian group $G$ by translation, then any $S$-polychromatic coloring of $G$ with $|S|$ colors is also a $(-S)$-polychromatic coloring.
\end{corollary}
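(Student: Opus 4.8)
The plan is to reduce everything to the structural ``moreover'' clause of Theorem~\ref{polytile}. Fix a finite $S$ that tiles $G$, and let $\chi$ be \emph{any} $S$-polychromatic coloring of $G$ using $|S|$ colors. A translate of $-S$ has the form $n+(-S)=\{n-s : s\in S\}$ for some $n\in G$, and every translate of $-S$ arises this way, so it suffices to show that for each $n\in G$ the set $n-S$ meets every color class of $\chi$.

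Next I would feed $\chi$ directly into Theorem~\ref{polytile}. Since $\chi$ is $S$-polychromatic with $|S|$ colors, the theorem tells us that each color class $T_\ell=\{g\in G:\chi(g)=\ell\}$ is a tiling complement set for $S$, so $S\oplus T_\ell=G$ and in particular $S+T_\ell=G$. Then for any $n\in G$ we may write $n=s+t$ with $s\in S$ and $t\in T_\ell$, whence $n-s=t\in T_\ell$. Thus $n-s$ is an element of $n-S$ carrying color $\ell$. As $\ell$ ranges over all colors and $n$ over all of $G$, every translate of $-S$ contains a representative of every color, which is precisely the statement that $\chi$ is $(-S)$-polychromatic.

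There is essentially no obstacle here: once Theorem~\ref{polytile} is in hand, the whole content is the one-line observation that the covering identity $S+T_\ell=G$ forces each translate of $-S$ to hit $T_\ell$. The one point to be careful about is the bookkeeping of signs, and the fact that it is the covering property $S+T_\ell=G$ alone (not the disjointness in $S\oplus T_\ell$) that is being used. As a consistency check, note that $-S$ also tiles $G$, with complement set $-T_\ell$, so $p(-S)=|{-S}|=|S|$, in agreement with the conclusion.
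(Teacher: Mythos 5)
Your proof is correct and follows essentially the same route as the paper: both invoke the ``moreover'' clause of Theorem~\ref{polytile} to conclude that each color class $T_\ell$ satisfies $S+T_\ell=G$, and then deduce that every translate of $-S$ meets every color class. The only difference is that you carry out the sign-flipping step ($n=s+t$ gives $n-s=t\in T_\ell$) by direct computation, whereas the paper routes it through Lemma~\ref{CompTur} (converting the complement set $T_\ell$ into the blocking set $-T_\ell$ for $S$); this is an inlining of the same observation, not a different argument.
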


\begin{proof}
Suppose $S$ tiles $G$.  By Theorem~\ref{polytile}, there exists an $S$-polychromatic coloring $\chi$ of $G$ with $|S|$ colors.  Let $T \subseteq G$ be the set of all elements of a given color. Again by Theorem~\ref{polytile}, $S + T = G$. Therefore by Lemma~\ref{CompTur}, $-T$ is a blocking set for $S$, i.e. for all $n \in G$, $n+S \nsubseteq G\setminus (-T)$. This implies that for all $n \in G$, $-n-S \nsubseteq G\setminus T$, i.e. $T$ is a blocking set for $-S$.  Since $T$ is a blocking set for $-S$ for every color choice, every translate of $-S$ contains every color, i.e. the coloring $\chi$ is $(-S)$-polychromatic.
\end{proof}

Define $t(S)$ to be the cardinality of the largest subset of $S$ that tiles $G$. 

\begin{corollary}
For any finite subset $S$ of an abelian group $G$, $p(S) \ge t(S)$.
\end{corollary}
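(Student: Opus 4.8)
The plan is to combine two facts already established in the excerpt: the monotonicity of the polychromatic number under taking subsets, and the tiling characterization in Theorem~\ref{polytile}. First I would let $S' \subseteq S$ be a subset achieving the maximum in the definition of $t(S)$, so that $S'$ tiles $G$ by translation and $|S'| = t(S)$. The whole argument hinges on relating $p(S)$ to $p(S')$ and then evaluating $p(S')$.

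Next I would invoke Theorem~\ref{polytile}, which states that a finite set tiles $G$ by translation if and only if its polychromatic number equals its cardinality. Applying this to $S'$, which tiles $G$ by assumption, gives $p(S') = |S'| = t(S)$. This converts the combinatorial quantity $t(S)$ into a statement purely about polychromatic numbers.

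Finally I would use the elementary observation recorded in the introduction: if $S'$ is a subset of $S$, then any $S'$-polychromatic coloring is also $S$-polychromatic, so $p(S') \le p(S)$. Chaining these together yields
\[
t(S) = |S'| = p(S') \le p(S),
\]
which is exactly the claimed inequality.

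I do not expect any genuine obstacle here, since the statement follows by directly assembling previously proved results; the only thing to be careful about is that Theorem~\ref{polytile} applies to $S'$ (a set that actually tiles $G$) rather than to $S$ itself, and that the subset monotonicity is used in the correct direction, namely $p(S') \le p(S)$ for $S' \subseteq S$.
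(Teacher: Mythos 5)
Your proof is correct and is exactly the argument the paper intends: the corollary is stated without an explicit proof precisely because it follows immediately from Theorem~\ref{polytile} applied to a maximum tiling subset $S' \subseteq S$ together with the monotonicity $p(S') \le p(S)$ noted in the introduction. You apply both facts in the right direction, so there is nothing to add.
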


If $S\subseteq \Z$, $|S|\le 3$, then $p(S)=t(S)$. But these parameters can be different for sets of integers with at least four elements.  For example, $S=\{0,1,3,7\}$ is an example of a set where $t(S)=2$, but $p(S)=3$.

\begin{question}
For sets $S$ of a given cardinality, how large can the gap between $t(S)$ and $p(S)$ be?
\end{question}

\section{Polychromatic Colorings in $\Z^d$}\label{z2}

In this section we consider polychromatic numbers in the case where $G=\Z^d$, $d\ge 2$. We will frequently ``project'' a set $S \subseteq \Z^d$ to another set $S'\subseteq \Z^{d-1}$ as follows. Let $d \ge 2$, and $\vect{w}=(w_1,\ldots, w_{d-1},1) \in \Z^d$. Define $f_{\vect{w}}: \Z^d \rightarrow \Z^{d-1}$ so that if $\vect{s}=(v_1, \ldots, v_d) \in \Z^d$, 
\[f_{\vect{w}}(\vect{s})= (v_1, \ldots, v_{d-1}) - v_d (w_1,\ldots, w_{d-1}).\]
We call $f_{\vect{w}}(\vect{s})$ the \textit{projection} of $\vect{s}$ along $\vect{w}$. Given a set $S \subseteq \Z^d$, we call the set $f_{\vect{w}}(S) \subseteq \Z^{d-1}$ the \textit{projection} of $S$ along $\vect{w}$.

%Given a set $S =\{\vect{s}_1, \ldots, \vect{s}_k\} \subseteq \Z^d$, if for $1\le i \le k$, $\vect{s}_i'$ is the projection of $\vect{s}_i$ along $\vect{w}$, we call the set $S' = \{\vect{s}_1', \ldots, \vect{s}_k'\} \subseteq \Z^{d-1}$ the \textit{projection} of $S$ along $\vect{w}$.

For example, if $\vect{s}=(2,7,4)$ and $\vect{w} = (3,1,1)$, the projection of $\vect{s}$ along $\vect{w}$ is $f_{\vect{w}}(\vect{s}) = (2,7) - 4(3,1)= (-10,3)$. As another example, note that if $\vect{s}=(v_1, \ldots, v_d) \in \Z^d$, the vector $\vect{s}' = (v_1, \ldots, v_{d-1}) \in \Z^{d-1}$ is the projection of $\vect{s}$ along $\vect{w}= (0,\ldots, 0,1)$.

%\begin{lemma}\label{proj}
%Let $d \ge 2$, and $\vect{w}=(w_1,\ldots, w_{d-1},1) \in \Z^d$. Let $S=\{\vect{s}_1, \ldots, \vect{s}_k\} \subseteq \Z^d$, and suppose $S' = \{\vect{s}_1', \ldots, \vect{s}_k'\}\subseteq \Z^{d-1}$ is the projection of $S$ along $\vect{w}$.  Then $p_{\Z^d}(S) \ge p_{\Z^{d-1}}(S')$.
%\end{lemma}
\begin{lemma}\label{proj}
Let $d \ge 2$, and $\vect{w}=(w_1,\ldots, w_{d-1},1) \in \Z^d$. Let $S \subseteq \Z^d$, and suppose $S' \subseteq \Z^{d-1}$ is the projection of $S$ along $\vect{w}$.  Then $p_{\Z^d}(S) \ge p_{\Z^{d-1}}(S')$.
\end{lemma}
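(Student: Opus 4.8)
The plan is to observe that the projection map $f_{\vect{w}}$ is a group homomorphism and then to invoke Lemma~\ref{hom} directly.

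First I would verify that $f_{\vect{w}}: \Z^d \to \Z^{d-1}$ is a homomorphism of abelian groups. For $\vect{s}=(v_1,\ldots,v_d)$ and $\vect{t}=(u_1,\ldots,u_d)$ in $\Z^d$, the $i$th coordinate of $f_{\vect{w}}(\vect{s})$ is $v_i - v_d w_i$, an integer-linear function of the coordinates of $\vect{s}$. Hence each coordinate of $f_{\vect{w}}$ is additive, and
\[ f_{\vect{w}}(\vect{s}+\vect{t}) = f_{\vect{w}}(\vect{s}) + f_{\vect{w}}(\vect{t}), \]
so $f_{\vect{w}}$ respects addition and is a homomorphism.

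Next, since $S' = f_{\vect{w}}(S)$ is by definition the projection of $S$ along $\vect{w}$, I would apply Lemma~\ref{hom} with $G=\Z^d$, $H=\Z^{d-1}$, and $\phi=f_{\vect{w}}$. This immediately yields
\[ p_{\Z^d}(S) \ge p_{\Z^{d-1}}(f_{\vect{w}}(S)) = p_{\Z^{d-1}}(S'), \]
which is the desired inequality. Unpacking Lemma~\ref{hom}, the mechanism is simply that pulling back an $S'$-polychromatic coloring of $\Z^{d-1}$ through $f_{\vect{w}}$ produces an $S$-polychromatic coloring of $\Z^d$ using the same number of colors.

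There is no substantive obstacle here: essentially all the content is carried by Lemma~\ref{hom}, and the only thing requiring verification is the additivity of $f_{\vect{w}}$, which is routine linearity. I would note in passing that the normalization $w_d = 1$ on the last coordinate of $\vect{w}$ plays no role in this argument (indeed $w_d$ does not even appear in the formula defining $f_{\vect{w}}$); that hypothesis is presumably imposed only for the subsequent results on collinear sets and tilings in $\Z^d$.
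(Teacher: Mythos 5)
Your proof is correct and follows exactly the paper's argument: the paper likewise proves Lemma~\ref{proj} by noting that $f_{\vect{w}}$ is a homomorphism and citing Lemma~\ref{hom}, with the only difference being that you spell out the routine additivity check that the paper omits.
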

\begin{proof}
Since $f_{\vect{w}}: \Z^d \to \Z^{d-1}$ is a homomorphism, the result follows from Lemma~\ref{hom}.
%We show that if $\chi_1$ is an $S'$-polychromatic $r$-coloring of $\Z^{d-1}$, we can define an $S$-polychromatic $r$-coloring $\chi_2$ on $\Z^d$. For all $\vect{n} \in \Z^{d}$, let $\chi_2(\vect{n}) = \chi_1(f_{\vect{w}}(\vect{n}))$.  If $\vect{n}\in \Z^d$ and $\vect{n}'=f_{\vect{w}}(\vect{n}) \in \Z^{d-1}$, then for all $i$, $\chi_2(\vect{n}+\vect{s}_i) = \chi_1(f_{\vect{w}}(\vect{n}+ \vect{s}_i))=\chi_1(\vect{n}'+ \vect{s}_i')$.  Since $\vect{n}'+S'$ is polychromatic under $\chi_1$, $\vect{n}+S$ is polychromatic under $\chi_2$, and $p_{\Z^d}(S)\ge p_{\Z^{d-1}}(S')$. 
\end{proof}

\begin{proposition}\label{fullcard}
Let $d \ge 2$. For any $S\subseteq \Z^d$, there is a projection $S' \subseteq \Z^{d-1}$ where $|S|=|S'|$.
\end{proposition}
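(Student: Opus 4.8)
The plan is to show that the projection map $f_{\vect{w}}$ is injective on $S$ for all but finitely many admissible $\vect{w} = (w_1, \ldots, w_{d-1}, 1)$, and then exploit the fact that there are infinitely many such $\vect{w}$ to select a good one. Recall that $|S'| = |S|$ precisely when $f_{\vect{w}}$ restricted to $S$ is injective, so the whole task reduces to choosing the projection direction so that no two distinct points of $S$ get identified.

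First I would determine exactly when two distinct points $\vect{s} = (v_1, \ldots, v_d)$ and $\vect{s}^* = (v_1^*, \ldots, v_d^*)$ of $S$ collide, i.e.\ satisfy $f_{\vect{w}}(\vect{s}) = f_{\vect{w}}(\vect{s}^*)$. Unwinding the definition of $f_{\vect{w}}$, this holds if and only if $v_i - v_d w_i = v_i^* - v_d^* w_i$ for every $1 \le i \le d-1$, that is, $v_i - v_i^* = (v_d - v_d^*) w_i$. The crucial observation is that if $v_d = v_d^*$, these equations force $v_i = v_i^*$ for all $i$, hence $\vect{s} = \vect{s}^*$; so two distinct points sharing a last coordinate never collide, no matter which $\vect{w}$ is used.

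Next I would bound the set of bad directions. For each of the at most $\binom{|S|}{2}$ pairs of distinct points with $v_d \ne v_d^*$, the relations above determine each $w_i$ uniquely as the rational number $(v_i - v_i^*)/(v_d - v_d^*)$, so there is at most one integer vector $(w_1, \ldots, w_{d-1}) \in \Z^{d-1}$ causing that pair to collide (and none when some ratio fails to be an integer). Collecting these over all pairs shows the set $B \subseteq \Z^{d-1}$ of ``bad'' vectors is finite, with $|B| \le \binom{|S|}{2}$.

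Finally, since $d \ge 2$ gives $d - 1 \ge 1$, the group $\Z^{d-1}$ is infinite, so I can choose $(w_1, \ldots, w_{d-1}) \in \Z^{d-1} \setminus B$ and set $\vect{w} = (w_1, \ldots, w_{d-1}, 1)$. For this $\vect{w}$ no pair of distinct points of $S$ collides, so $f_{\vect{w}}$ is injective on $S$ and the projection $S' = f_{\vect{w}}(S)$ satisfies $|S'| = |S|$. I do not expect any genuine obstacle here; the only step needing care is the observation that collisions can occur solely between points with distinct last coordinates, since it is precisely this fact that makes the set of forbidden directions finite.
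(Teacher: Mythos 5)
Your proof is correct and follows essentially the same approach as the paper: both identify that $f_{\vect{w}}$ collapses a pair of points only when $\vect{w}$ is the (unique) scalar multiple of their difference vector with last coordinate $1$, so only finitely many choices of $(w_1,\ldots,w_{d-1})$ are bad, and any of the infinitely many remaining choices works. Your write-up is in fact slightly more careful than the paper's, since you explicitly dispose of the case of equal last coordinates and bound the number of bad directions by $\binom{|S|}{2}$.
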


\begin{proof}
Let $S =\{\vect{s}_1, \ldots, \vect{s}_k\} \subseteq \Z^d$ and suppose $\vect{w} = (w_1, \ldots, w_{d-1},1) \in \Z^d$. For $1 \le i \le k$ let $\vect{s}_i'=f_{\vect{w}}(\vect{s}_i)$. For $1 \le i \le k$, let $s_{id}$ denote the last coordinate of $\vect{s}_i$ and note that if $i \neq j$, $\vect{s}_i' = \vect{s}_j'$ if and only if 
\[\vect{w} = \frac{1}{s_{id}-s_{jd}}(\vect{s}_i - \vect{s}_j).\]
%NOTE: times a constant
In other words $\vect{s}_i' = \vect{s}_j'$ if and only if $\vect{w}$ is parallel to $\vect{s}_i - \vect{s}_j$.  Since the number of differences $\vect{s}_i - \vect{s}_j$ is finite, we can choose $\vect{w}$ so that it is not parallel to any of these. For this choice of $\vect{w}$, for all $1\le i\neq j \le k$, $\vect{s}_i' \neq \vect{s}_j'$ Then $S' = \{\vect{s}_1', \ldots, \vect{s}_k'\}$, is a projection $S$ with $|S'| = |S|$.
\end{proof}

\begin{theorem}\label{hsd}
Fix $d \ge 2$. For a finite set $S \subseteq \Z^d$, $p(S) \ge \frac{(1+o(1))|S|}{\ln|S|}$.
\end{theorem}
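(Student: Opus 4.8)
The plan is to reduce the $d$-dimensional statement to the one-dimensional bound of Theorem~\ref{Harris} by iterating the projection operation, exploiting the fact that a single projection can neither decrease the polychromatic number (Lemma~\ref{proj}) nor, if chosen correctly, the cardinality (Proposition~\ref{fullcard}). In effect, the theorem is a clean corollary obtained by stacking these three earlier results.

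First I would set up an induction on $d$, taking the case $d=1$ to be exactly Theorem~\ref{Harris}. For the inductive step, fix $d \ge 2$ and a finite set $S \subseteq \Z^d$. By Proposition~\ref{fullcard} there is a vector $\vect{w}=(w_1,\ldots,w_{d-1},1)$ whose associated projection $S' = f_{\vect{w}}(S) \subseteq \Z^{d-1}$ satisfies $|S'| = |S|$. Lemma~\ref{proj} then gives $p_{\Z^d}(S) \ge p_{\Z^{d-1}}(S')$, and applying the induction hypothesis to the $(d-1)$-dimensional set $S'$ yields
\[
p_{\Z^{d-1}}(S') \ge \frac{(1+o(1))|S'|}{\ln|S'|}.
\]
Since $|S'| = |S|$, the right-hand side equals $\frac{(1+o(1))|S|}{\ln|S|}$, which completes the inductive step. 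Unwinding the recursion amounts to composing $d-1$ cardinality-preserving projections to obtain a set $S^{\ast} \subseteq \Z$ with $|S^{\ast}| = |S|$ and $p_{\Z^d}(S) \ge p_{\Z}(S^{\ast})$, after which Theorem~\ref{Harris} applies directly.

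The only point that warrants attention, and which I would state explicitly, is the uniformity of the $o(1)$ term throughout the iteration. Because $d$ is held fixed, the chain of projections has only $d-1$ steps and the cardinality is preserved exactly at each one; the asymptotics are taken as $|S| \to \infty$ with $d$ constant, so the single error function furnished by Theorem~\ref{Harris} controls the entire chain and no error accumulates. I do not anticipate a genuine obstacle: all of the substantive combinatorial content lives in Theorem~\ref{Harris}, Lemma~\ref{proj}, and Proposition~\ref{fullcard}, and the role of the present argument is merely to assemble them.
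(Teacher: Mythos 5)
Your proposal is correct and is essentially identical to the paper's own proof: both reduce to the one-dimensional case by composing $d-1$ cardinality-preserving projections (Proposition~\ref{fullcard}), applying Lemma~\ref{proj} at each step, and then invoking Theorem~\ref{Harris}. The paper simply phrases the iteration directly rather than as a formal induction on $d$, and your remark on the uniformity of the $o(1)$ term (valid since $d$ is fixed and cardinality is preserved exactly) is a harmless elaboration of the same argument.
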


\begin{proof}
Given $S \subseteq \Z^d$, Proposition~\ref{fullcard} implies we can project $d-1$ times to ultimately obtain a set $S' \subseteq \Z$, with $|S'|=|S|$.  Theorem~\ref{Harris}, along with repeated application of Lemma~\ref{proj}, implies $p_{\Z^d}(S) \ge p_{\Z}(S') \ge \frac{(1+o(1))|S'|}{\ln|S'|} = \frac{(1+o(1))|S|}{\ln|S|}$.
\end{proof}

We will be interested in the case where $S= \{\vect{s}_0, \vect{s}_1, \vect{s}_2, \ldots, \vect{s}_k\}\subseteq \Z^d$ contains a set of collinear points.  In the general case, we can assume for each $i$ that $\vect{s}_i = (b_1 + l_ia_1, b_2+ l_i a_2, \ldots, b_d + l_ia_d)$, where $0=l_0 <l_1 < l_2 < \cdots <l_k$, $a_i, b_i \in \Z$, and $\gcd(a_1, a_2, \ldots, a_d)=1$.  However since translation does not affect the polychromatic number, we will restrict our attention to the case where $a_1 > 0$ and for all $i$, $b_i=0$.

\begin{theorem}\label{pcolzd}
Let $d \ge 2$. Let $S = \{\vect{s}_0, \vect{s}_1, \vect{s}_2, \ldots, \vect{s}_k\}$ be a set of $k+1$ collinear points in $\Z^d$ where for each $i$, $\vect{s}_i = (l_ia_1, l_i a_2, \ldots, l_ia_d)$, where $0=l_0 <l_1 < l_2 < \cdots <l_k$, $a_i \in \Z$, $a_1 > 0$, and $\gcd(a_1, a_2, \ldots, a_d)=1$.  Let $S' = \{0, l_1, l_2, \ldots, l_k\} \subseteq \Z$.  Then $p_{\Z^d}(S) = p_{\Z}(S')$.
\end{theorem}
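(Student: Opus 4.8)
The plan is to realize $S'$ and $S$ as essentially the same set living in two isomorphic groups: the integers $\Z$ on one hand, and the line $\Z\vect{a}$ sitting inside $\Z^d$ on the other. Write $\vect{a}=(a_1,\ldots,a_d)$ and define the map $\psi\colon \Z \to \Z^d$ by $\psi(n)=n\vect{a}$. This is a group homomorphism, and since $a_1>0$ we have $\vect{a}\neq\vect{0}$, so $\psi(n)=\vect{0}$ forces $n=0$; hence $\psi$ is injective. Its image is the subgroup $H:=\Z\vect{a}=\{n\vect{a}:n\in\Z\}$ of $\Z^d$, and $\psi\colon \Z\to H$ is an isomorphism. (The hypothesis $\gcd(a_1,\ldots,a_d)=1$ is not actually needed for injectivity; it merely records that $\vect{a}$ is a primitive direction vector.)

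The key observation is that $\psi$ carries $S'$ exactly onto $S$: indeed $\psi(l_i)=l_i\vect{a}=\vect{s}_i$ for each $i$, so $\psi(S')=S$, and since the $l_i$ are distinct and $\psi$ is injective we also recover $|S|=|S'|$. Applying Corollary~\ref{iso} to the isomorphism $\psi\colon \Z\to H$ then gives
\[
p_{\Z}(S') = p_H(\psi(S')) = p_H(S).
\]
Finally I would invoke the ambient-group lemma to pass from the subgroup $H$ to all of $\Z^d$. Since $H$ is a subgroup of $\Z^d$ and $S\subseteq H$, Lemma~\ref{ambient} yields $p_H(S)=p_{\Z^d}(S)$. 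Combining with the previous display gives $p_{\Z^d}(S)=p_H(S)=p_{\Z}(S')$, the desired equality.

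I expect no real obstacle here: once $\psi$ is set up, the theorem is a two-line consequence of Corollary~\ref{iso} and Lemma~\ref{ambient}, and the only care needed is to verify that $\psi(S')=S$ and that $\psi$ is an isomorphism onto $H$. Conceptually, the only direction carrying genuine content is $p_{\Z^d}(S)\ge p_{\Z}(S')$, namely turning a good $S'$-coloring of $\Z$ into a good $S$-coloring of $\Z^d$; this is precisely the step already handled by Lemma~\ref{ambient}, which colors each point of $\Z^d$ according to the $H$-component of its coset-representative decomposition. The reverse inequality $p_{\Z^d}(S)\le p_{\Z}(S')$ is immediate and could alternatively be extracted directly from Lemma~\ref{hom} applied to $\psi$, since restricting any coloring of $\Z^d$ to the line $H$ recovers an $S'$-coloring of $\Z$.
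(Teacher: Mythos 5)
Your proof is correct, but it takes a different route from the paper's. The paper proves the two inequalities separately: for $p_{\Z^d}(S) \ge p_{\Z}(S')$ it introduces the intermediate set $S'' = \{0, l_1a_1, \ldots, l_ka_1\} \subseteq \Z$, obtains $S''$ from $S$ by a sequence of $d-1$ projections and applies Lemma~\ref{proj}, then identifies $p_{\Z}(S'')$ with $p_{\Z}(S')$ via the scaling argument of Lemma~\ref{reduc}, Part (i); for the reverse inequality it applies Lemma~\ref{hom} to the homomorphism $n \mapsto n\vect{a}$, exactly as you note at the end. You instead upgrade that same homomorphism to an isomorphism $\psi\colon \Z \to H = \Z\vect{a}$ onto its image, get $p_{\Z}(S') = p_H(S)$ from Corollary~\ref{iso}, and then handle the content-carrying direction with the coset-extension argument of Lemma~\ref{ambient}, which gives $p_H(S) = p_{\Z^d}(S)$ in one stroke. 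This is precisely the pattern used in the paper's own proof of Lemma~\ref{reduc}, Part (i), transplanted to $\Z^d$, and it buys you two things: it bypasses the projection machinery entirely, and it avoids invoking Lemma~\ref{reduc}, Part (i), which the paper states only for four-element sets and so, strictly speaking, applies here beyond its stated scope (the generalization is trivial, but your route does not need it at all). You also correctly observe that your argument never uses $\gcd(a_1,\ldots,a_d)=1$, so the theorem holds without that hypothesis; the paper's approach, by contrast, has the virtue of exercising the projection framework (Lemma~\ref{proj}, Proposition~\ref{fullcard}) that the paper needs anyway for non-collinear sets.
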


\begin{proof}
Let $S'' =  \{0, l_1a_1, l_2a_1, \ldots, l_ka_1\} \subseteq \Z$.  By Lemma~\ref{reduc}, Part (i), $p_{\Z}(S') = p_{\Z}(S'')$.  Since $S''$ can be obtained from $S$ by a sequence of $d-1$ projections, Lemma~\ref{proj} implies $p_{\Z^d}(S) \ge p_{\Z}(S'')= p_{\Z}(S')$.  For the other direction, let $\vect{a} = (a_1,\ldots, a_d) \in \Z^d$ and note that the function $\phi : \Z \to \Z^d$ where $\phi(n) = n\vect{a}$ is a homomorphism where $\phi(S') = S$.  Thus by Lemma~\ref{hom},  $p_{\Z}(S') \ge p_{\Z^d}(S)$
%
% suppose $\chi_2$ is an $S$-polychromatic $r$-coloring of $\Z^d$. Let  $\chi_1$ be the $r$-coloring of $\Z$ where for all $n \in \Z$, $\chi_1(n) = \chi_2(n(a_1,a_2, \ldots, a_d))$. Let $n \in \Z$ and $\vect{n}' = n(a_1,\ldots, a_d) \in \Z^d$.  Then for all $i$, $\chi_1(n+l_i) = \chi_2(\vect{n}' + \vect{s}_i)$, and since $\chi_2$ is $S$-polychromatic, $\chi_1$ is $S'$-polychromatic and $p_{\Z^d}(S) \le p_{\Z}(S')$. 
\end{proof}

Now we return to the subject of tilings.  Lemma~\ref{subgroup} and Theorems~\ref{startile}, \ref{dcro}, and \ref{1tiled} are well-known in the field of discrete geometry (see e.g. Section III of \cite{St86}) as simple examples of ``splitting'' groups. We restate them here using the language of polychromatic colorings.

\begin{lemma}\label{subgroup}
If a set $S\subseteq G$ tiles a nontrivial subgroup $H$ of $G$, then $S$ tiles $G$.
\end{lemma}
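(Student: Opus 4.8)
The plan is to reduce the statement to the polychromatic characterization of tiling already proved in Theorem~\ref{polytile}, so that essentially no new computation is required. Write $S \subseteq H$, and recall that since $S$ tiles $H$ there is a tiling complement set $T_H$ with $H = S \oplus T_H$; note $T_H \subseteq H$, since for $s \in S \subseteq H$ and $t \in T_H$ we have $t = (s+t) - s \in H$.

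First I would apply Theorem~\ref{polytile} inside the group $H$: because $S$ tiles $H$, it gives $p_H(S) = |S|$. Next, since $H$ is a subgroup of $G$ and $S \subseteq H$, Lemma~\ref{ambient} gives $p_G(S) = p_H(S) = |S|$. Finally, applying Theorem~\ref{polytile} again, now in $G$, the equality $p_G(S) = |S|$ forces $S$ to tile $G$, which is exactly the conclusion. The nontriviality of $H$ plays no essential role in this argument; it only excludes the degenerate case $H = \{0\}$, $S = \{0\}$.

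For readers who prefer an explicit tiling complement (and to cover possibly infinite $S$, where Theorem~\ref{polytile} does not directly apply), I would alternatively give the standard transversal construction. Fix a set $V \subseteq G$ of coset representatives of $H$ in $G$, so every $g \in G$ has a unique expression $g = h + v$ with $h \in H$ and $v \in V$, and set $T = T_H + V$. Covering is then immediate: writing $g = h + v$ and then $h = s + t'$ with $s \in S$, $t' \in T_H$ gives $g = s + (t' + v) \in S + T$.

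The step requiring care — and the only real obstacle in either approach — is uniqueness, i.e. disjointness of the translates. In the polychromatic route this is packaged entirely inside Theorem~\ref{polytile}. In the direct route, the hard part will be this: suppose $s_1 + (t_1' + v_1) = s_2 + (t_2' + v_2)$ with $s_i \in S$, $t_i' \in T_H$, and $v_i \in V$. Since $s_i + t_i' \in H$, the difference $v_2 - v_1 = (s_1 + t_1') - (s_2 + t_2')$ lies in $H$, so $v_1$ and $v_2$ represent the same coset and hence $v_1 = v_2$; cancelling reduces the equation to $s_1 + t_1' = s_2 + t_2'$ inside $H$, and uniqueness of the tiling $H = S \oplus T_H$ then yields $s_1 = s_2$ and $t_1' = t_2'$. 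This is precisely the point at which the subgroup property of $H$ is used.
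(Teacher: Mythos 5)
Your first argument is exactly the paper's proof: apply Theorem~\ref{polytile} in $H$ to get $p_H(S)=|S|$, transfer this to $G$ via Lemma~\ref{ambient}, and apply Theorem~\ref{polytile} again in $G$. Your supplementary transversal construction ($T = T_H + V$) is also correct and is a genuine bonus, since it covers sets $S$ that are not assumed finite (where Theorem~\ref{polytile} does not apply), but for the paper's purposes the two routes coincide.
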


\begin{proof}
Suppose $S\subseteq G$ tiles a nontrivial subgroup $H$ of $G$. Theorem~\ref{polytile} implies $p_H(S) = |S|$, so by Lemma~\ref{ambient}, $p_G(S) = |S|$.  By Theorem~\ref{polytile}, $S$ tiles $G$.
%Suppose $S\oplus T =H$.  Let $V$ be a set consisting of one element from each coset of $H$. Then by properties of cosets, $H \oplus V = G$. In other words, for any $n \in G$, there is a unique $h  \in H$, $v \in V$ such that $n=h+v$.  Further, there is a unique $s\in S$, $t\in T$ such that $h=s+t$.  Thus $n=(s+t) +v = s+(t+v)$ and $S+(T+V)= G$. To show uniqueness, suppose $n=s'+(t'+v')$ where $s \neq s'$. Then $(s+t)+v = (s'+t')+v'$ which implies $h+v = h'+v'$ for some $h, h' \in H$.  Since $H \oplus V = G$, this implies $v=v'$, and so $s+t = s'+t'$. Since $S\oplus T =H$, $s=s'$ and $t=t'$. Thus $S \oplus (T+V)=G$.
\end{proof}

For any $d\ge 1$, let $\vect{0}$ denote the element $(0,0,\ldots, 0)\in \Z^d$ and
 let $\vect{e}_i$ denote the element $(0,\ldots, 0,1,0, \ldots,0) \in \Z^d$ with all 0's except for a 1 in the $i$th position. For $\vect{s} = (v_1, \ldots, v_d) \in \Z^d$, let $-\vect{s}=(-v_1, \ldots, -v_d)$.  Define the $d$-semicross $SC_d = \{\vect{0},\vect{e}_1, \ldots, \vect{e}_d\}$ and the $d$-cross $C_d=\{\vect{0},\vect{e}_1,-\vect{e}_1, \vect{e}_2, -\vect{e}_2, \ldots, \vect{e}_d, -\vect{e}_d\}$.  Theorem~\ref{polytile} implies that any finite set $S \subseteq G$ with $p(S)=|S|$ tiles $G$, and we use this insight to show that these sets tile $\Z^d$.

\begin{theorem}\label{startile}
For all $d \ge 1$, the $d$-semicross $SC_d=\{\vect{0},\vect{e}_1, \ldots, \vect{e}_d\}$ tiles $\Z^d$. 
\end{theorem}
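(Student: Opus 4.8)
The plan is to invoke Theorem~\ref{polytile}, which reduces the claim that $SC_d$ tiles $\Z^d$ to the assertion that $p_{\Z^d}(SC_d)=|SC_d|=d+1$. Since any $SC_d$-polychromatic coloring can use at most $|SC_d|=d+1$ colors (a single translate has only $d+1$ elements, so no coloring can force more than $d+1$ colors on it), the upper bound $p_{\Z^d}(SC_d)\le d+1$ is automatic. Thus it suffices to produce an $SC_d$-polychromatic coloring of $\Z^d$ using $d+1$ colors, which by Lemma~\ref{hom} I can do by exhibiting a suitable homomorphism into a small group.

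First I would define $\phi:\Z^d \to \Z_{d+1}$ by $\phi(v_1,\ldots,v_d)=\sum_{i=1}^d i\,v_i \pmod{d+1}$, which is a group homomorphism. The key computation is that $\phi(\vect{0})=0$ and $\phi(\vect{e}_i)=i$ for $1\le i\le d$, so the images of the $d+1$ elements of $SC_d$ under $\phi$ are exactly the residues $0,1,2,\ldots,d$. Hence $\phi(SC_d)=\{0,1,\ldots,d\}=\Z_{d+1}$, i.e. $\phi$ restricts to a bijection from $SC_d$ onto all of $\Z_{d+1}$. Coloring $\Z_{d+1}$ with each element receiving its own color is $\Z_{d+1}$-polychromatic, since every translate of the full group $\Z_{d+1}$ is again the full group and therefore contains all $d+1$ colors; thus $p_{\Z_{d+1}}\bigl(\phi(SC_d)\bigr)=d+1$.

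Applying Lemma~\ref{hom} with this $\phi$ then gives $p_{\Z^d}(SC_d)\ge p_{\Z_{d+1}}\bigl(\phi(SC_d)\bigr)=d+1$, and combined with the trivial upper bound we conclude $p_{\Z^d}(SC_d)=d+1=|SC_d|$. Theorem~\ref{polytile} immediately yields that $SC_d$ tiles $\Z^d$; concretely, the tiling complement set is $\ker\phi$, the index-$(d+1)$ sublattice $\{\vect{v}\in\Z^d : \sum_{i=1}^d i\,v_i\equiv 0 \pmod{d+1}\}$, though this explicit description is not needed for the argument. There is essentially no obstacle here: the only nonroutine step is choosing the coefficient-weighting $\sum i\,v_i$, and this choice is forced by the requirement that $\vect{0},\vect{e}_1,\ldots,\vect{e}_d$ map to distinct residues modulo $d+1$.
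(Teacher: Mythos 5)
Your proof is correct and is essentially the paper's own argument: both rest on the coloring $\chi(v_1,\ldots,v_d)=\sum_{i=1}^d i\,v_i \pmod{d+1}$ together with Theorem~\ref{polytile}. The only difference is presentational — you route the verification through Lemma~\ref{hom} and the quotient homomorphism onto $\Z_{d+1}$, while the paper checks directly that every translate of $SC_d$ receives the $d+1$ distinct colors $\chi(\vect{n}),\chi(\vect{n})+1,\ldots,\chi(\vect{n})+d \pmod{d+1}$; this is the same computation.
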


\begin{proof}
Consider the coloring $\chi:\Z^d \to [d+1]$ where $\chi(v_1, \ldots, v_d) = v_1 + 2v_2 + 3v_3 + \cdots +dv_d \pmod{d+1}$.  On any translate $\vect{n} + SC_d \subseteq \Z^d$, the colors $\chi(\vect{n}+\vect{0}), \chi(\vect{n}+\vect{e}_1), \chi(\vect{n}+\vect{e}_2), \ldots, \chi(\vect{n}+\vect{e}_d)$ are $\chi(\vect{n}), \chi(\vect{n})+1, \chi(\vect{n})+2, \ldots, \chi(\vect{n})+d$ $\pmod{d+1}$.  They are all different, so $\chi$ is $SC_d$-polychromatic with $|SC_d|=d+1$ colors.  By Theorem~\ref{polytile}, $SC_d$ tiles $\Z^d$.
\end{proof}

\begin{theorem}\label{dcro}
For all $d \ge 1$, the $d$-cross $C_d=\{\vect{0},\vect{e}_1,-\vect{e}_1, \vect{e}_2, -\vect{e}_2, \ldots, \vect{e}_d, -\vect{e}_d\}$ tiles $\Z^d$.
\end{theorem}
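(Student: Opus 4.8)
The plan is to mirror the proof of Theorem~\ref{startile} and appeal to Theorem~\ref{polytile}: since $|C_d| = 2d+1$, it suffices to exhibit a $C_d$-polychromatic coloring of $\Z^d$ with $2d+1$ colors, which immediately implies $p_{\Z^d}(C_d) = |C_d|$ and hence that $C_d$ tiles $\Z^d$. The natural candidate, by analogy with the semicross coloring but using modulus $2d+1$ in place of $d+1$, is the coloring $\chi : \Z^d \to \{0, 1, \ldots, 2d\}$ defined by
\[
\chi(v_1, \ldots, v_d) = v_1 + 2v_2 + 3v_3 + \cdots + dv_d \pmod{2d+1}.
\]

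First I would compute the colors appearing on an arbitrary translate $\vect{n} + C_d$. Because $\chi$ is a linear form, adding $\vect{e}_i$ increases $\chi$ by $i$ and subtracting $\vect{e}_i$ decreases it by $i$ (all mod $2d+1$). Thus the colors of the $2d+1$ points $\vect{n}, \vect{n} \pm \vect{e}_1, \ldots, \vect{n} \pm \vect{e}_d$ are exactly
\[
\chi(\vect{n}) + j \pmod{2d+1}, \qquad j \in \{0, \pm 1, \pm 2, \ldots, \pm d\}.
\]

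The one point requiring verification is that the shift set $\{0, \pm 1, \ldots, \pm d\}$ is a complete residue system modulo $2d+1$: since $-i \equiv 2d+1-i \pmod{2d+1}$ and $i \mapsto 2d+1-i$ sends $\{1,\ldots,d\}$ bijectively onto $\{d+1, \ldots, 2d\}$, the set reduces to all of $\{0,1,\ldots,2d\}$. Consequently the $2d+1$ colors on any translate are pairwise distinct, so $\chi$ uses all $2d+1$ colors on every translate and is $C_d$-polychromatic. Applying Theorem~\ref{polytile} then completes the proof.

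I expect no serious obstacle here, as the argument is essentially identical to the semicross case; the only genuine content is the choice of modulus $2d+1$, which is forced precisely so that the differences $\pm 1, \ldots, \pm d$ introduced by the two arms of the cross remain distinct modulo the size of $C_d$. The slight asymmetry with the semicross (where $d+1$ sufficed because only the $+\vect{e}_i$ arms are present) is the point worth flagging in the writeup.
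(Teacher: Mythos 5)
Your proposal is correct and is essentially identical to the paper's own proof: the same coloring $\chi(v_1,\ldots,v_d)=v_1+2v_2+\cdots+dv_d \pmod{2d+1}$, the same observation that a translate of $C_d$ receives the shifts $0,\pm 1,\ldots,\pm d$, and the same appeal to Theorem~\ref{polytile}. Your explicit check that $\{0,\pm 1,\ldots,\pm d\}$ forms a complete residue system modulo $2d+1$ is a detail the paper leaves implicit, but there is no substantive difference.
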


\begin{proof}
The $(2d+1)$-coloring $\chi:\Z^d \to [2d+1]$ where $\chi(v_1, \ldots, v_d) = v_1 + 2v_2 + 3v_3 + \cdots +dv_d \pmod{2d+1}$ is $C_d$-polychromatic:  On any translate $\vect{n} + C_d \subseteq \Z^d$, the colors $\chi(\vect{n}+\vect{0}), \chi(\vect{n}+\vect{e}_1), \chi(\vect{n}-\vect{e}_1), \chi(\vect{n}+\vect{e}_2), \chi(\vect{n}-\vect{e}_2),\ldots, \chi(\vect{n}+\vect{e}_d), \chi(\vect{n}-\vect{e}_d)$ are $\chi(\vect{n}), \chi(\vect{n})+1, \chi(\vect{n})-1, \chi(\vect{n})+2, \chi(\vect{n})-2, \ldots, \chi(\vect{n})+d, \chi(\vect{n})-d$ $\pmod{2d+1}$. 
\end{proof}

\begin{theorem}\label{1tiled}
Let $d \ge  2$. Let $S\subseteq \Z^d$ be a set that contains $\vect{0}$ and $j \le d$ other elements $\vect{s}_1, \ldots, \vect{s}_j$, where no nontrivial integer linear combination of $\{\vect{s}_1, \ldots, \vect{s}_j\}$ is $\vect{0}$. Then $S$ tiles $\Z^d$.
\end{theorem}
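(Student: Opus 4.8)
The plan is to reduce the statement to the semicross tiling already established in Theorem~\ref{startile} by passing to the subgroup generated by the $\vect{s}_i$, and then to lift that tiling back up to all of $\Z^d$ via Lemma~\ref{subgroup}. Throughout I assume $j \ge 1$, since if $j=0$ then $S=\{\vect{0}\}$ tiles $\Z^d$ trivially (with complement set all of $\Z^d$).

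First I would let $H$ be the subgroup of $\Z^d$ generated by $\vect{s}_1, \ldots, \vect{s}_j$, and define $\psi \colon \Z^j \to \Z^d$ by $\psi(m_1, \ldots, m_j) = \sum_{i=1}^j m_i \vect{s}_i$, so that $H = \psi(\Z^j)$. The hypothesis that no nontrivial integer linear combination of the $\vect{s}_i$ equals $\vect{0}$ says precisely that $\ker\psi = \{\vect{0}\}$, so $\psi$ is injective and hence an isomorphism from $\Z^j$ onto $H$. Under this isomorphism $\vect{e}_i \mapsto \vect{s}_i$ and $\vect{0} \mapsto \vect{0}$, so $\psi$ carries the $j$-semicross $SC_j = \{\vect{0}, \vect{e}_1, \ldots, \vect{e}_j\}$ onto $S = \{\vect{0}, \vect{s}_1, \ldots, \vect{s}_j\}$.

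Next I would run the chain of equivalences supplied by the earlier results. By Theorem~\ref{startile} the semicross $SC_j$ tiles $\Z^j$, so Theorem~\ref{polytile} gives $p_{\Z^j}(SC_j) = |SC_j| = j+1$. Since $\psi$ is an isomorphism taking $SC_j$ to $S$, Corollary~\ref{iso} yields $p_H(S) = p_{\Z^j}(SC_j) = j+1 = |S|$, and applying Theorem~\ref{polytile} in the group $H$ then shows that $S$ tiles $H$.

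Finally, because $j \ge 1$ the vector $\vect{s}_1$ is a nonzero element of $H$, so $H$ is a nontrivial subgroup of $\Z^d$ containing $S$. Lemma~\ref{subgroup} therefore implies that $S$ tiles $\Z^d$, which completes the argument. I do not expect any genuine obstacle: the only point needing care is recognizing that the $\Z$-linear independence hypothesis is exactly what upgrades $\psi$ from a surjection onto $H$ to an \emph{isomorphism}, which is precisely what lets $S$ inherit the semicross tiling of $\Z^j$ through Corollary~\ref{iso}. (One could instead mimic the direct colorings used in Theorems~\ref{startile} and \ref{dcro}, exhibiting a homomorphism $\phi\colon \Z^d \to \Z_{j+1}$ with $\phi(\vect{s}_i)\equiv i$, but constructing that $\phi$ is essentially the content of the semicross theorem, so reusing Theorem~\ref{startile} is cleaner.)
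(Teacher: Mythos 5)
Your proposal is correct and follows essentially the same path as the paper's proof: both pass to the subgroup $H$ generated by $\vect{s}_1,\ldots,\vect{s}_j$, invoke Theorem~\ref{startile} for the semicross tiling of $\Z^j$, transport it to $H$ via the linear map $\vect{e}_i \mapsto \vect{s}_i$, and finish with Lemma~\ref{subgroup}. The only (cosmetic) difference is that the paper pushes the tiling complement set directly through that map, while you route the transfer through polychromatic numbers via Corollary~\ref{iso} and Theorem~\ref{polytile}; your version has the minor virtue of making explicit that the linear-independence hypothesis is what makes the map an isomorphism, a point the paper leaves implicit.
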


\begin{proof}
Let $H\subseteq \Z^d$ be the set of all integer linear combinations of $\{\vect{s}_1, \ldots, \vect{s}_j\}$. By Theorem~\ref{startile}, there is a set $T\subseteq \Z^j$ such that $\{\vect{0},\vect{e}_1, \ldots, \vect{e}_j\} \oplus T = \Z^j$.  Let $M: \Z^j \rightarrow \Z^d$ be the unique linear transformation which maps $\vect{e}_i$ to $\vect{s}_i$ for each $i \le j$. Then $\{\vect{0}, \vect{s}_1, \ldots, \vect{s}_j\}$ tiles $H$ with complement set $\{M(t) : t \in T\}$.   Since $H$ is a subgroup of $\Z^d$, by Lemma~\ref{subgroup}, $S$ tiles $\Z^d$.
\end{proof}

We can now determine the polychromatic number of any set $S$ of cardinality 3 or 4 in $\Z^d$, $d \ge 2$. Since translation does not affect polychromatic numbers, in all cases we may assume $\vect{0} \in S$.

\begin{theorem}
Let $d \ge 2$ and suppose $S \subseteq \Z^d$ has cardinality 3, with $\vect{0} \in S$. Then $p_{\Z^d}(S) = 3$ if the three points are in general position or if they are collinear and there exists $S' \subseteq \Z$ with $p_{\Z}(S') = 3$ such that $S'$ is the image of $S$ after $d-1$ projections. Otherwise $p_{\Z^d}(S) = 2$.
\end{theorem}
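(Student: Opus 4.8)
The plan is to split on the single geometric dichotomy available to three distinct points: they are either in general position (affinely independent, i.e.\ not collinear) or collinear. In both cases the trivial upper bound $p_{\Z^d}(S) \le |S| = 3$ is available, since any translate of $S$ offers only three elements to carry colors, so the whole problem reduces to deciding whether the value is $3$ or $2$.

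In the general position case I would show that $S$ tiles $\Z^d$ and then invoke Theorem~\ref{polytile}. Writing $S = \{\vect{0}, \vect{s}_1, \vect{s}_2\}$, non-collinearity together with $\vect{0}\in S$ forces $\vect{s}_1,\vect{s}_2$ to be linearly independent over $\R$, so no nontrivial integer linear combination of $\{\vect{s}_1,\vect{s}_2\}$ equals $\vect{0}$. Since $d\ge 2$ we have $j=2\le d$, so Theorem~\ref{1tiled} applies and $S$ tiles $\Z^d$. Theorem~\ref{polytile} then gives $p_{\Z^d}(S)=|S|=3$, exactly as the statement claims in this branch.

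In the collinear case I would use Theorem~\ref{pcolzd} both to pin down the value and to supply the witness. After normalizing $S$ as in that theorem, its proof exhibits a set $S^\ast\subseteq\Z$ reached from $S$ by a sequence of $d-1$ projections, with $|S^\ast|=3$ and $p_\Z(S^\ast)=p_{\Z^d}(S)$; in particular $p_{\Z^d}(S)\in\{2,3\}$. The forward implication is then immediate: if $p_{\Z^d}(S)=3$, then $S^\ast$ is a $(d-1)$-fold projection image of $S$ with $p_\Z(S^\ast)=3$, so the required $S'$ exists. For the converse I would chain Lemma~\ref{proj} through $d-1$ steps, obtaining $p_{\Z^d}(S)\ge p_\Z(S')$ for \emph{every} image $S'$ of $S$ after $d-1$ projections into $\Z$; hence if some such $S'$ has $p_\Z(S')=3$, then $p_{\Z^d}(S)\ge 3$, and together with $p_{\Z^d}(S)\le 3$ this forces equality. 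Combining the two directions shows that for collinear $S$ one has $p_{\Z^d}(S)=3$ precisely when some $(d-1)$-fold projection image has polychromatic number $3$; since the value is confined to $\{2,3\}$, every remaining configuration gives $p_{\Z^d}(S)=2$, which is exactly the ``Otherwise'' clause.

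The argument is essentially an assembly of Theorems~\ref{1tiled}, \ref{polytile}, and \ref{pcolzd} with Lemma~\ref{proj}, so I anticipate no deep difficulty. The one point demanding care is the forward direction of the collinear equivalence: the proof of Theorem~\ref{pcolzd} relates $p_{\Z^d}(S)$ to $p_\Z$ of a scaled canonical set via Lemma~\ref{reduc}(i), so I must be careful to use the genuine projection image $S^\ast$ (the set literally reached by the $d-1$ maps $f_{\vect{w}}$) as the existential witness rather than the scaled-down copy, and to confirm that these projections keep the three points distinct so that $p_\Z(S^\ast)$ can equal $3$. Verifying that $S^\ast$ is an admissible $(d-1)$-fold projection image is the only genuinely delicate bookkeeping.
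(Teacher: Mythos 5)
Your proposal is correct and follows essentially the same route as the paper: Theorem~\ref{1tiled} combined with Theorem~\ref{polytile} settles the general-position case, and Theorem~\ref{pcolzd} together with Lemma~\ref{proj} settles the collinear case. The paper's own proof is just terser---it cites Theorem~\ref{newtile77} only as the means of deciding which of the values $2$ or $3$ occurs for the resulting set of three integers, while you spell out the two directions of the collinear equivalence explicitly.
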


\begin{proof}Theorem~\ref{1tiled} implies that if $d \ge 2$ and $S \subseteq \Z^d$ consists of three points in general position, then $S$ tiles $\Z^d$, and thus $p(S)=3$.  If $S \subseteq \Z^d$ has three collinear points, then Theorem~\ref{pcolzd} implies the problem is equivalent to finding the polychromatic number of a set of three integers, which is either 2 or 3 and can be determined using Theorem~\ref{newtile77}.
\end{proof}

\begin{theorem}
Let $d \ge 2$ and suppose $S \subseteq \Z^d$ has cardinality $4$, with $\vect{0} \in S$. Then
\begin{enumerate}
\item If all points of $S$ are collinear, $p_{\Z^d}(S)$ is $3$ or $4$.
\item If exactly three points of $S$ are collinear, $p_{\Z^d}(S)=4$.
\item If $d\ge 3$ and $S$ has four points in general position, $p_{\Z^d}(S)=4$.
\item If $d=2$ and $S$ has four points in general position, $p_{\Z^2}(S)$ is $3$ or $4$.
\end{enumerate}
\end{theorem}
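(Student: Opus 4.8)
The plan is to dispatch the easy bounds first and then isolate the one genuinely hard case. For \emph{every} four-point set $S\subseteq\Z^d$ one has $3\le p_{\Z^d}(S)\le 4$: the upper bound is the trivial $p(S)\le|S|$, and the lower bound is obtained exactly as in the proof of Theorem~\ref{hsd}, by using Proposition~\ref{fullcard} repeatedly to find a projection $S'\subseteq\Z$ with $|S'|=4$, so that Lemma~\ref{proj} gives $p_{\Z^d}(S)\ge p_{\Z}(S')$ and Theorem~\ref{ps34} gives $p_{\Z}(S')\ge 3$. This already settles parts (1) and (4), where the only claim is ``$3$ or $4$.'' For part (3), since $\vect 0\in S$ and the four points are in general position, the three non-zero points are linearly independent, so no nontrivial integer linear combination of them is $\vect 0$; as $j=3\le d$, Theorem~\ref{1tiled} shows $S$ tiles $\Z^d$, and Theorem~\ref{polytile} gives $p_{\Z^d}(S)=4$.

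The remaining and main work is part (2). First I would reduce to the plane. Translating so that one of the three collinear points is $\vect 0$ (which does not change the polychromatic number), write $S=\{\vect 0,\alpha\vect u,\beta\vect u,\vect c\}$, where $\vect u$ is a primitive vector along the line and $\vect c$ is off it. The subgroup $H=\langle S\rangle$ then has rank $2$ with basis $\{g\vect u,\vect c\}$ for $g=\gcd(\alpha,\beta)$, so by Lemma~\ref{ambient} and Corollary~\ref{iso} we have $p_{\Z^d}(S)=p_{\Z^2}(S'')$, where $S''=\{(0,0),(\alpha',0),(\beta',0),(0,1)\}$ with $\alpha'=\alpha/g$, $\beta'=\beta/g$ and $\gcd(\alpha',\beta')=1$. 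It therefore suffices to show that this planar set tiles $\Z^2$, i.e.\ that $p_{\Z^2}(S'')=4$.

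The key device is a shearing construction that converts the $2$-dimensional problem into a $1$-dimensional one. Suppose we can find $\delta\in\Z$ for which $Q:=\{0,\alpha',\beta',\delta\}$ tiles $\Z$; by Theorem~\ref{polytile} there is then a $Q$-polychromatic coloring $\phi\colon\Z\to\{0,1,2,3\}$ with four colors. Define $\Phi(x,y)=\phi(x+\delta y)$. For any translate of $S''$ the four points $(x,y),(x+\alpha',y),(x+\beta',y),(x,y+1)$ receive the colors $\phi(u),\phi(u+\alpha'),\phi(u+\beta'),\phi(u+\delta)$ with $u=x+\delta y$, that is, the colors of a translate of $Q$; these are four distinct colors, so $\Phi$ is $S''$-polychromatic and $p_{\Z^2}(S'')=4$. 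To produce $\delta$ I would argue via Newman's criterion (Theorem~\ref{newtile77} with $p=2$, $\alpha=2$): since $\gcd(\alpha',\beta')=1$, not both of $\alpha',\beta'$ are even, and a quick check shows the three differences of $\{0,\alpha',\beta'\}$ have $2$-adic valuations lying in a two-element set $\{0,k\}$ with $k\ge 1$. A short parity analysis then lets me choose $\delta$ (for instance $\delta=2^k$ when $\alpha',\beta'$ are both odd, and $\delta=\beta'+2^k$ when $\alpha'$ is the even one, with $k=v_2(\alpha')$) so that the three new differences $\delta,\delta-\alpha',\delta-\beta'$ also have valuations in $\{0,k\}$. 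Then all six pairwise differences of $Q$ have valuations in $\{0,k\}$, so by Theorem~\ref{newtile77} the set $Q$ tiles $\Z$, as required.

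The main obstacle is entirely in part (2): parts (1), (3), (4) follow from the general bound together with Theorem~\ref{1tiled}, but part (2) needs a real tiling construction because a simple homomorphism into an order-$4$ group fails in general (e.g.\ for $\{(0,0),(1,0),(5,0),(0,1)\}$, since $(1,0)$ and $(5,0)$ differ by $(4,0)$, which maps to $\vect 0$ in every abelian group of order $4$). The shearing idea circumvents this by reducing to the $1$-dimensional tiling of the augmented set $Q$, and the one point requiring care is the existence of a suitable $\delta$, which the $2$-adic valuation argument above supplies.
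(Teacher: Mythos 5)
Your proposal is correct and takes essentially the same route as the paper: parts (1) and (4) via the general bounds from Proposition~\ref{fullcard}, Lemma~\ref{proj}, and Theorem~\ref{ps34}, part (3) via Theorem~\ref{1tiled}, and part (2) by using a cardinality-preserving homomorphism onto $\Z$ (your shear $(x,y)\mapsto x+\delta y$ is exactly the paper's projection) to reduce to completing the three collinear values to a four-element subset of $\Z$ that tiles by Newman's Theorem~\ref{newtile77}. The only difference is cosmetic: where the paper normalizes by a terse ``without loss of generality'' on the parity of $a,b$ and takes $c=a+b$, you keep coprime $\alpha',\beta'$ and give explicit $2$-adic choices of $\delta$, which makes the same argument slightly more self-contained.
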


\begin{proof}
 For $d \ge 2$ and a set $S\subseteq \Z^d$ with $|S|=4$,  Proposition~\ref{fullcard} implies that there is a set $S' \subseteq \Z$ where $|S'|=4$ and $S'$ can be obtained by $d-1$ projections of $S$. Thus Theorem~\ref{ps34} and Lemma~\ref{proj} imply that $p(S)\ge 3$. Determining whether $p(S)$ is 3 or 4 is equivalent to determining whether $S$ tiles $\Z^d$.  As with the $|S|=3$ case, we can examine cases depending on how many points of $S$ are collinear.

If the four points of $S$ are in general position, then if none is a nontrivial integer linear combination of the others, $p(S)=4$ by Theorem~\ref{1tiled}.  Otherwise, we can assume $S\subseteq \Z^2$.  In this case, $p(S)$ can be 3, for example if $S=\{(0,0), (1,0), (0,1), (1,2)\} \subseteq \Z^2$. It can also be 4, for example if $S=\{(0,0), (1,0), (0,1), (1,1)\} \subseteq \Z^2$.  Szegedy~\cite{Sz01} gave an algorithm to determine if a set of cardinality 4 tiles $\Z^2$.

If the four points of $S$ are all collinear, then $p(S)$ is determined by applying Theorems~\ref{pcolzd} and \ref{newtile77}.

If exactly three of the four points are collinear, then without loss of generality assume $S= \{\vect{0}, (a,0, \ldots, 0), (b,0,\ldots 0), \vect{s}\}$, with $0<a<b$ and $\vect{s} = (s_1, \ldots, s_d)$. Then $S$ is in the subgroup $\{x \in \Z^d: x_i \in s_i\Z \text{ for } 2 \le i \le d\}$.  By Lemmas~\ref{hom} and \ref{ambient} we may assume that $s_i=1$ for some $2 \le i \le d$, and thus by a sequence of projections, $S$ can be projected to the set $\{0, a, b, c\}$, for any $c \in \Z$. By Lemma~\ref{proj} it suffices to show that there exists $c\in \Z$ such that $p_{\Z}(\{0, a, b, c\})=4$. Without loss of generality, we may assume that $a$ and $b$ have different parity, and in this case Theorem~\ref{newtile77} implies that $S=\{0,a,b,a+b\}$ has polychromatic number 4.
%If exactly three of the four points are collinear, then without loss of generality, assume that the three collinear points are $\{\vect{0}, (a,0, \ldots, 0), (b,0,\ldots 0)\}$, where $a$ and $b$ do not have the same parity.  Since the fourth point $\vect{s}$ can be projected anywhere onto the line, by Lemma~\ref{proj} it suffices to show that there exists $c\in \Z$ such that $p_{\Z}(\{0, a, b, c\})=4$. By Theorem~\ref{newtile77}, the value $c=a+b$ has this property.
\end{proof}

The fact that $p_{\Z^d}(S)=4$ if $S$ contains exactly three collinear points implies that for any set $S$ of three integers, there is a 4-coloring of $\Z$ so that every translate of $S$ gets three different colors. Here is an explicit example of one such coloring. Without loss of generality we need only consider sets of the following form: Let $S=\{0,a,b\} \subseteq \Z$ where $a$ and $b$ are positive with $a$ even and $b$ odd (note that we do not specify which is larger). Define the \textit{alternating block 4-coloring relative to $S$} as follows: Given any $m \in \Z$, let $q_m$ and $r_m$ be the unique integers such that $m=2aq_m+r_m$, where $-a\le r_m <a$. Let $X(m)=0$ if $r_m \ge 0$, $X(m)=1$ otherwise.  Let $Y(m)=0$ if $m$ is even, $Y(m)=1$ otherwise.  Define $\chi$, the alternating block 4-coloring relative to $S$, so that $\chi(m) = (X(m),Y(m))$. 

\begin{theorem}
Let  $S=\{0,a,b\} \subseteq \Z$ with $a, b >0$, $a$ even, and $b$ odd. If the integers are colored with the alternating block 4-coloring relative to $S$ then every translate of $S$ has elements of three different colors.
\end{theorem}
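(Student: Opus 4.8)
The plan is to reinterpret the two coordinate functions $X$ and $Y$ so that the polychromatic property reduces to a short parity-and-block computation. First I would note that $Y(m)=0$ exactly when $m$ is even, i.e. $Y(m) \equiv m \pmod 2$. Next I would show that $X(m) \equiv \floor{m/a} \pmod 2$: writing $m = 2aq_m + r_m$ with $-a \le r_m < a$, the condition $r_m \ge 0$ means $m \in [2aq_m, 2aq_m + a) = [a\cdot 2q_m,\, a(2q_m+1))$, so $\floor{m/a} = 2q_m$ is even, while $r_m < 0$ means $m \in [2aq_m - a, 2aq_m) = [a(2q_m-1),\, a\cdot 2q_m)$, so $\floor{m/a} = 2q_m - 1$ is odd. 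Thus the color of $m$ records the parity of $m$ together with the parity of the length-$a$ block containing $m$.

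With the coloring rewritten, I would fix a translate $\{n, n+a, n+b\}$ of $S$ and compare the three colors coordinatewise. Since $a$ is even, $Y(n+a)=Y(n)$; since $b$ is odd, $Y(n+b) \equiv Y(n)+1 \pmod 2$. Hence $\chi(n+b)$ differs in its second coordinate from both $\chi(n)$ and $\chi(n+a)$, so it is distinct from each of them no matter what its first coordinate is. It then remains only to separate $\chi(n)$ from $\chi(n+a)$, which already agree in the second coordinate. For the first coordinate, $\floor{(n+a)/a} = \floor{n/a} + 1$, so $X(n+a) \equiv X(n)+1 \pmod 2$ and thus $X(n+a) \ne X(n)$. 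Therefore $\chi(n)$ and $\chi(n+a)$ differ in their first coordinate. Combining the three comparisons, $\chi(n)$, $\chi(n+a)$, and $\chi(n+b)$ are pairwise distinct, so the translate receives three different colors.

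The argument is genuinely short, so there is no serious obstacle; the only step requiring care is the identity $X(m) \equiv \floor{m/a} \pmod 2$, which converts the block description in terms of $q_m$ and $r_m$ into the cleaner floor form that makes the effect of shifting by $a$ transparent. Once this is in hand, adding $a$ flips the block parity while preserving the global parity, and adding $b$ flips the global parity; the coordinatewise comparison above then finishes the proof.
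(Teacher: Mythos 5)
Your proof is correct and takes essentially the same approach as the paper: the paper's (very terse) argument likewise observes that $Y(n)=Y(n+a)\neq Y(n+b)$ and $X(n)\neq X(n+a)$, and concludes that the three colors are pairwise distinct. Your only addition is the explicit verification of the block-parity flip via the identity $X(m)\equiv \floor{m/a} \pmod{2}$, a detail the paper leaves to the reader.
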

\begin{proof}
For any translate $n+S = \{n, n+a, n+b\}$ of $S$, $X(n)\neq X(n+a)$, while $Y(n) = Y(n+a) \neq Y(n+b)$. Thus $\chi$ has the property that any translate of $S$ contains elements with three different colors.
\end{proof}
%NOTE: can add figure showing ABABABABCDCDCDCDABABABABCDCDCDCD?

%An alternative proof that $p(S) = 4$ in the case of exactly 3 collinear points is to explicitly describe the coloring where the three collinear points are assigned three different colors: 

%Four-color the line containing three of the points with the alternating block 4-coloring relative to $S$.  Then permute this coloring on the parallel line through the fourth point s.t. $S$ has all four colors.  Then use these two lines as a template for the rest of the plane.  Then use the plane as a template for the rest of $\Z^d$.

Given a set of three integers, the alternating block 4-coloring shows that there is a 4-coloring of the integers so that every translate gets three different colors. If $S\subset \Z$, $|S|=4$, is there a 5-coloring of $\Z$ so that every translate of $S$ has 4 colors? More generally, we ask the following question.%(the answer should be no, as there is a 5-set with polychromatic number 3, right?)  

\begin{question}
Let $d \ge 1$. Given $k,n \in \Z$ with $k \le n$, let $p(n,k)$ denote the minimum $r$ so that any $S \subseteq \Z$ with $|S|=n$ has an $r$-coloring where every translate of $S$ gets at least $k$ colors.  What is an asymptotic upper bound on $p(n,k(n))$ for natural choices of $k(n)$? %E.g. $k(n) = n-1$? %NOTE: Or $\Z^d$?
\end{question}

%%%%%%%%%%%%%%%%%%%%%%%%%%%%%%%%55
\junk{
\begin{proposition}\label{3noncol}
Let $S= \{(0,0), (a, b), (c,d)\} \subseteq \Z^2$.  If the three points in $S$ are not collinear, then $p(S) = 3$.
\end{proposition}

\begin{proof} 3-color the line containing two of the points with a special block coloring.  Then color the parallel line through the third point s.t. $S$ has all three colors. By the properties of the special block coloring, the second line will be colored with a permutation of the coloring of the first line.  Then use these two lines as a template for the rest of the plane.
\end{proof}
}
%%%%%%%%%%%%%%%%%%%%%%%%%%%%%%%%%%

\section{Acknowledgement}
This research was conducted at a workshop made possible by the Alliance for Building Faculty Diversity in the Mathematical Sciences (DMS 0946431), held at the Institute for Computational and Experimental Research in Mathematics.  

The authors wish to thank the anonymous referee for generalizing some results and greatly improving the exposition.

\end{document}